\tikzstyle{bsq}=[rectangle, draw, thick, minimum width=1cm, minimum height=1cm]
\tikzstyle{bver}=[rectangle, draw, thick, minimum width=1cm, minimum height=2cm]
\tikzstyle{bhor}=[rectangle, draw, thick, minimum width=2cm, minimum height=1cm]
\newtheorem{theorem}{Theorem}[section]
\newtheorem{lemma}[theorem]{Lemma}
\newtheorem{corollary}[theorem]{Corollary}
\newtheorem{proposition}[theorem]{Proposition}
\newtheorem{varexample}[theorem]{Example}
\newtheorem*{TropicalMRC}{Tropical Maximal Rank Conjecture}
\newtheorem*{MRC}{Maximal Rank Conjecture}
\theoremstyle{definition}
\newtheorem{remark}[theorem]{Remark}
\newtheorem{definition}[theorem]{Definition}
\newcommand{\PP}{\mathbb{P}}
\newcommand{\ZZ}{\mathbb{Z}}
\newcommand{\cA}{\mathcal{A}}
\newcommand{\ddiv}{\operatorname{div}}
\newcommand{\PL}{\operatorname{PL}}
\newcommand{\Sym}{\operatorname{Sym}}
\newenvironment{example}{\begin{varexample}
\begin{normalfont}}{\end{normalfont}
\end{varexample}}
\begin{document}
\title[Combinatorial and inductive methods]{Combinatorial and inductive methods for the tropical maximal rank conjecture}
\author{David Jensen}
\author{Sam Payne}
\date{}
\bibliographystyle{alpha}

\begin{abstract}
We produce new combinatorial methods for approaching the tropical maximal rank conjecture, including inductive procedures for deducing new cases of the conjecture on graphs of increasing genus from any given case.  Using explicit calculations in a range of base cases, we prove this conjecture for the canonical divisor, and in a wide range of cases for $m = 3$, extending previous results for $m = 2$.
\end{abstract}

\maketitle

\section{Introduction}

The classical maximal rank conjecture in algebraic geometry predicts the Hilbert function in each degree $m$ for the general embedding of a general algebraic curve of fixed genus $g$ and degree $d$ in projective space $\PP^r$, by specifying that certain linear multiplication maps on global sections should be of maximal rank.

\begin{MRC}
Suppose $g$, $r$, $d$, and $m$ are positive integers, with $r \geq 3$, such that $g > (r+1)(g-d+r)$, and let $X \subset \PP^r$ be a general curve of genus $g$ and degree $d$.  Then the multiplication map
\[
\mu_m: \Sym^m H^0(X, \mathcal{O}_X(1)) \rightarrow H^0(X, \mathcal{O}_X(m))
\]
is either injective or surjective.
\end{MRC}

In previous work, we introduced the notion of tropical independence to study ranks of such multiplication maps combinatorially, using minima of piecewise-linear functions on graphs arising via tropicalization.  As first applications, we gave a new proof of the Gieseker-Petri theorem \cite{tropicalGP}, and formulated a purely combinatorial analogue on tropical curves of the maximal rank conjecture for algebraic curves, which we proved for $m=2$ \cite{MRC}.

\begin{TropicalMRC}
Suppose $g$, $r$, $d$, and $m$ are positive integers, with $r \geq 3$, such that $g \geq (r+1)(g-d+r)$ and $d < g + r$.  Then there is a divisor $D$ of rank $r$ and degree $d$ whose class is vertex avoiding on a chain of $g$ loops with generic edge lengths, and a tropically independent subset $\cA \subset \{ \psi_I \ | \ \vert I \vert = m \}$ of size
\[
\vert \cA \vert = \min \left \{ {r + m \choose m}, \ md - g + 1 \right \}.
\]
\end{TropicalMRC}

\noindent Note that each case of the tropical maximal rank conjecture implies the classical maximal rank conjecture for the same parameters $g$, $r$, $d$, and $m$, through well-known tropical lifting and specialization arguments \cite[Proposition~4.7]{MRC}.

As the links to algebraic geometry are already established, the main purpose of this paper is to introduce new combinatorial methods for approaching the tropical maximal rank conjecture, and to use these methods to prove the conjecture for canonical divisors (i.e. the case where $r = g-1$ and $d = 2g-2$, for all $m$), and for a wide range of cases with $m = 3$.  Our results include inductive statements through which new cases of the tropical maximal rank conjecture can be deduced from other cases with smaller parameters (Theorems~\ref{thm:injectiveinduction} and \ref{thm:surjectiveinduction}), along with explicit combinatorial calculations in increasingly intricate examples (see Sections~\ref{Sec:Canonical}-\ref{Sec:SmallRank}), providing base cases for applying such inductions.

To state our results as cleanly as possible, we find it helpful to divide the space of parameters $(g,r,d,m)$ into the \emph{injective range}, where ${r + m \choose m} \leq md - g + 1$, and the \emph{surjective range}, where ${r + m \choose m} \geq md - g + 1$.
Under the hypotheses of the classical maximal rank conjecture, when $m>1$ the vector spaces $\Sym^m H^0(X, \mathcal{O}_X(1))$ and $H^0(X, \mathcal{O}_X(m))$ have dimension  ${r+m \choose m}$ and $md - g + 1$, respectively, so the injective range (resp. surjective range) is exactly the set of parameters for which the classical maximal rank conjecture predicts $\mu_m$ to be injective (resp. surjective).  In the setting of the tropical maximal rank conjecture, the set $\{ \psi_I \ | \ \# I = m \}$ has size ${r+m \choose m}$, so $(g,r,d,m)$ is in the injective or surjective range, respectively, according to whether the tropically independent set $\cA$ is supposed to consist of all possible $\psi_I$, or a subset of size $md - g + 1$.

We also find it convenient to change coordinates on the space of parameters in the conjectures, setting $s = g - d + r$ and $\rho  = g - (r+1)(g-d+r)$.  These new parameters are natural from the point of view of algebraic geometry; in the context of the classical maximal rank conjecture, $s = h^1(X, \mathcal{O}_X(1))$, and $\rho$ is the Brill-Noether number, which gives the dimension of the space of linear series of degree $d$ and rank $r$ on a general curve of genus $g$.  Note that $(r,s,\rho,m)$ uniquely determines $(g,r,d,m)$, by the formulas
\[
g = rs + \rho \mbox{ \ \ and \ \ } d = g + r - s.
\]
Also, when $d < g + r$ and $g \geq (r+1)(g-d+r)$, the parameters $s$ and $\rho$ satisfy $s \geq 1$ and $0 \leq \rho \leq g-1$.  We say that $(r,s,\rho,m)$ is in the injective (resp. surjective) range if the corresponding $(g,r,d,m)$ is in the injective (resp. surjective) range.

\begin{theorem} \label{thm:injectiveinduction}
Suppose $(r,s,\rho,m)$ is in the injective range.  Then the tropical maximal rank conjecture for $(r,s,\rho,m)$ implies the tropical maximal rank conjecture for $(r,s,\rho + 1, m)$ and $(r,s+1, \rho, m)$.
\end{theorem}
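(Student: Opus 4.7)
The approach is to extend a chain $\Gamma$ of $g$ loops realizing the conjecture for $(r,s,\rho,m)$ to a longer chain $\Gamma'$ by appending loops on one end, to extend the vertex-avoiding divisor $D$ to a vertex-avoiding divisor $D'$ on $\Gamma'$ of the required degree and rank, and to deduce tropical independence of the extended family $\{\psi'_I\}$ on $\Gamma'$ from that of $\{\psi_I\}$ on $\Gamma$.

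For the implication to $(r,s,\rho+1,m)$, the genus and degree each grow by one, so I would attach a single new loop $\ell$ with generic edge lengths to the right-hand end of $\Gamma$ and set $D' = D + p$ for a suitable point $p \in \ell$. The combinatorial description of rank-$r$ divisors on chains of loops, together with the vertex-avoiding hypothesis for $D$, identifies a choice of $p$ such that $D'$ has rank $r$ and is vertex-avoiding on $\Gamma'$, and it yields an extension of each member $s_i$ of the tautological tropical basis associated to $D$ to a section $s'_i$ of $D'$ that is eventually constant on $\ell$.

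For the implication to $(r,s+1,\rho,m)$, the genus grows by $r$ and the degree by $r-1$, so I would append $r$ new loops and extend $D$ by an effective divisor of degree $r-1$ supported on the new loops. The chips must be distributed so that $D'$ has rank $r$ (and not larger) and so that $D'$ is vertex-avoiding on $\Gamma'$; again the combinatorics on chains of loops pins down an essentially unique choice, and produces compatible extensions $s'_i$ of the tropical basis sections.

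In both cases the decisive step is to show that for every multi-index $I$ with $|I|=m$, the restriction $\psi'_I|_\Gamma$ coincides with $\psi_I$ up to an additive constant $c_I$ depending only on $I$. Granted this, suppose for contradiction that $\{\psi'_I\}$ admits a tropical dependence witnessed by constants $\{a_I\}$, so that $\min_I (a_I + \psi'_I)$ achieves its minimum at least twice at every point of $\Gamma'$. Restricting to $\Gamma$ and replacing each $a_I$ by $a_I + c_I$ yields a tropical dependence among $\{\psi_I\}$ on $\Gamma$, contradicting the inductive hypothesis. The hardest part of the argument is establishing the restriction property: one must track the slopes of each extended section $s'_i$ across the bridges and loops of $\Gamma$ to confirm that they agree with those of $s_i$, and then argue that no cancellation in the tropical product $\psi'_I = s'_{i_1} + \cdots + s'_{i_m}$ spoils this agreement. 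The vertex-avoiding hypothesis is precisely what makes these slope calculations on the appended loops unambiguous.
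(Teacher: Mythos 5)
Your approach is essentially the paper's: append loops at the right-hand end of the chain, extend the lingering lattice path (a single lingering step with a generic point for $\rho \mapsto \rho+1$; one new step in each coordinate direction for $s \mapsto s+1$) so that $D'$ restricts to $D$ on the original subgraph and each $\psi'_I$ restricts to $\psi_I$ up to an additive constant, and then restrict any putative tropical dependence back to $\Gamma$ to contradict the inductive hypothesis. The only correction needed is numerical: since $\rho = g-(r+1)s$ gives $g=(r+1)s+\rho$ (the formula $g=rs+\rho$ in the introduction is a typo), increasing $s$ by one raises the genus by $r+1$ and the degree by $r$, so in the second case you must append $r+1$ loops carrying a degree-$r$ divisor --- an extra row of the $(r+1)$-column tableau, using all $r+1$ step types including $(-1,\ldots,-1)$ --- rather than $r$ loops of degree $r-1$; with that adjustment your argument coincides with the paper's.
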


\begin{theorem} \label{thm:surjectiveinduction}
Suppose $r \geq s$ and $(r,s,\rho,m)$ is in the surjective range.  Then the tropical maximal rank conjecture for $(r,s,\rho,m)$ implies the tropical maximal rank conjecture for $(r+1, s, \rho, m)$.
\end{theorem}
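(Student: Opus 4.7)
The plan is to produce the required data for $(r+1,s,\rho,m)$ by enlarging the data that witnesses $(r,s,\rho,m)$. Let $\Gamma$ be the chain of $g = rs+\rho$ loops with generic edge lengths, carrying a vertex-avoiding divisor class $[D]$ of rank $r$ and degree $d = g+r-s$, together with a tropically independent subset $\cA \subset \{\psi_I : |I|=m\}$ of size $md-g+1$. Attach $s$ further loops with generic edge lengths at an end of $\Gamma$ to form a chain $\Gamma'$ of $g' = (r+1)s+\rho$ loops, and lift $[D]$ to a vertex-avoiding divisor class $[D']$ of rank $r+1$ and degree $d' = d+s+1$ on $\Gamma'$. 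The combinatorial marking of $[D']$ retains the $r$ markings of $[D]$ on the original $g$ loops and distributes a single new marking (corresponding to the new index $r+1$) across the $s$ appended loops, in a way that is compatible with the genericity of edge lengths.

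Each old function $\psi_I$, for $I \subset \{0,\dots,r\}$ of size $m$, extends to a PL function $\psi'_I$ on $\Gamma'$ associated with $[D']$, agreeing with $\psi_I$ on $\Gamma$ up to a constant determined by the new markings. The target cardinality on $\Gamma'$ is
\[
md'-g'+1 \;=\; (md-g+1) \;+\; m(s+1) - s,
\]
so one must supplement $\{\psi'_I : I \in \cA\}$ with $m(s+1)-s$ new functions $\psi'_J$ whose index sets $J \subset \{0,\dots,r+1\}$ all contain the new index $r+1$. There are $\binom{r+m}{m-1}$ such subsets available, and under the surjective-range hypothesis combined with $r \geq s$ this count is at least $m(s+1)-s$, leaving room to choose the $J$'s so that the restrictions of the $\psi'_J$ to the appended loops are distinguished loop-by-loop.

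To verify tropical independence of the enlarged set $\cA'$, fix constants $c_I$ for $I \in \cA'$ and set $\theta := \min_{I \in \cA'}(c_I + \psi'_I)$. By tropical independence of $\cA$ on $\Gamma$, after absorbing the contributions of the new $\psi'_J$ into adjusted constants one produces a point $p \in \Gamma$ where $\min_{I \in \cA}(c_I + \psi'_I)$ is uniquely attained. If none of the new $\psi'_J$ meets this value at $p$, we are done. Otherwise, one moves the witness point onto a specific loop among the appended $s$ where, by the controlled choice of $J$'s, exactly one of the $\psi'_J$ achieves its minimum, yielding a point of $\Gamma'$ at which $\theta$ is uniquely attained.

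The main obstacle is this final separation step: arranging the supplementary $J$'s and the placement of the new marking so that on each of the $s$ appended loops, the restrictions of the new functions $\psi'_J$ can be simultaneously separated both from each other and from the restrictions of the extended old functions $\psi'_I$. The hypothesis $r \geq s$ is precisely what provides the combinatorial room in $\{0,\dots,r\}$ to spread the index sets $J \setminus \{r+1\}$ into a configuration realizing this separation; without it the $s$ new loops would not carry enough independent data to accommodate $m(s+1)-s$ functions. The arithmetic sanity checks — that $(r+1,s,\rho,m)$ remains in the surjective range and that $\cA'$ attains the required cardinality — follow directly from the formulas $g = rs+\rho$ and $d = g+r-s$.
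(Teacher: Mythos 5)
Your overall skeleton matches the paper's: extend the chain by $s$ loops, extend the divisor to one of rank $r+1$ whose restriction to the old chain is $D$, adjoin $m(s+1)-s$ new functions whose index sets all involve the new index, and reduce to the tropical independence of $\cA$ on the old subgraph. The arithmetic ($d'=d+s+1$, $g'=g+s$, and the count $md'-g'+1-(md-g+1)=m(s+1)-s=(m-1)(s+1)+1$) is correct, and placing the new block at the right end with new index $r+1$ rather than at the left with new index $0$ is an immaterial symmetric variant.

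However, there is a genuine gap at the heart of the independence argument. The paper's proof hinges on a concrete choice of the new functions (in its conventions, $\psi_{0^{(m)}}$ and $\psi_{0^{(k)}1^{(m-1-k)}\alpha}$ for $1\le k\le m-1$, $1\le\alpha\le s+1$, which is where $r\ge s$ is used) together with a slope computation on the long bridge $\beta_s$ separating the new block from the old chain: these $(m-1)(s+1)+1$ functions have pairwise distinct slopes there, all strictly greater than $mr$, the maximal slope of any old function. Because that bridge is very long, this forces the new functions to lie strictly above $\theta$ on the entire old subgraph, so the ``everywhere at least twice'' hypothesis descends to the old set $\cA$ alone and contradicts its independence. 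You never make a choice of the new index sets explicit, never compute slopes on the separating bridge, and hence have no argument that the new functions do not interfere with the old ones on the old chain; your ``otherwise, one moves the witness point onto a specific loop among the appended $s$'' is an unsupported assertion. Worse, the mechanism you propose in its place --- separating the new functions from one another ``loop-by-loop'' on the appended block --- cannot work as stated, since there are $m(s+1)-s>s$ new functions and only $s$ new loops, so they cannot each be isolated on their own loop; the paper's argument deliberately avoids having to separate the new functions on the new block at all. To repair the proof you must specify the new multisets and verify the distinct-and-dominating slope condition on $\beta_s$ (or its mirror image at the right end).
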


Note that both of these inductive procedures increase the genus and keep $m$ fixed.  Also, if $(r,s,\rho,m)$ is in the injective range then so are ($r,s, \rho+1, m)$ and $(r, s+1, \rho, m)$.  Similarly, if $(r,s,\rho,m)$ is in the surjective range, then so is $(r+1, s, \rho, m)$.  Therefore, proving any single case of the tropical maximal rank conjecture (e.g. by explicit computation) yields infinitely many other cases of increasing genus.

For $m =3$, we prove the cases where $\rho = 0$ and either $s \geq r^2 / 4$ or $r = s + 1$ by explicit computation, and deduce the following result.

\begin{theorem} \label{thm:mainthm}
The tropical maximal rank conjecture holds for $(r,s,\rho,3)$ when
\begin{enumerate}
\item $s \geq r^2/4$, or
\item $\rho = 0$ and $r \geq s + 1$.
\end{enumerate}
\end{theorem}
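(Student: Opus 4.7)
The plan is to derive Theorem~\ref{thm:mainthm} by combining the inductive Theorems~\ref{thm:injectiveinduction} and \ref{thm:surjectiveinduction} with the explicit base-case computations carried out in subsequent sections. The two parts call for two separate inductive schemes: part (1) varies $\rho$ with $r$ and $s$ fixed, while part (2) varies $r$ with $s$ fixed and $\rho = 0$.

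For part (1), I would take as base cases the tuples $(r, s, 0, 3)$ with $s \geq r^2/4$ and apply Theorem~\ref{thm:injectiveinduction} iteratively to raise $\rho$ from $0$ to any nonnegative integer. The only nontrivial bookkeeping is to verify that the injective-range inequality $\binom{r+3}{3} \leq 2rs + 2\rho + 3r - 3s + 1$ is maintained at each step; since the right-hand side is strictly increasing in $\rho$ and the left-hand side is independent of $\rho$, once the inequality holds at some value of $\rho$ it continues to hold for all larger values. The hypothesis $s \geq r^2/4$ is preserved trivially, as neither $r$ nor $s$ changes under this induction, so the full target family $(r, s, \rho, 3)$ with $s \geq r^2/4$ is swept out.

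For part (2), I would take as base cases the tuples $(s+1, s, 0, 3)$ for each $s \geq 1$ and apply Theorem~\ref{thm:surjectiveinduction} iteratively to increase $r$ with $s$ and $\rho = 0$ held fixed. The hypothesis $r \geq s$ is automatic from $r = s+1$ and is preserved by each step of the induction. The surjective-range inequality $\binom{r+3}{3} \geq 2rs + 3r - 3s + 1$ can be checked directly at $r = s+1$, and is maintained throughout the induction since the left-hand side grows cubically in $r$ while the right-hand side grows only linearly; hence the induction reaches every $(r, s, 0, 3)$ with $r \geq s + 1$.

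The substantive work hidden behind this inductive skeleton lies entirely in the explicit base-case constructions: producing, for each $(r, s, 0, 3)$ in the two base families, a vertex-avoiding divisor class on a chain of $g$ loops together with a tropically independent set of piecewise-linear functions of the required size. I expect this combinatorial step to be the main obstacle, as it demands genuinely different constructions for the two families and careful analysis of minima of piecewise-linear functions on chains of loops with generic edge lengths; this forms the content of Sections~\ref{Sec:Canonical}--\ref{Sec:SmallRank}.
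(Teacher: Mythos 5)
Your inductive skeleton is sound and, for part (2), it is exactly the paper's route: the case $r = s+1$, $\rho = 0$ is proved directly and Theorem~\ref{thm:surjectiveinduction} is then applied repeatedly in $r$ (the needed facts that $r \geq s$ persists and that the surjective range is closed under $r \mapsto r+1$ are as you say). For part (1) your reduction to $\rho = 0$ via Theorem~\ref{thm:injectiveinduction} is also legitimate, although the paper's written argument in fact handles all $\rho$ at once, since it derives its contradiction entirely within the first $(r+1)s$ loops.

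The genuine gap is that you have deferred the base cases wholesale, and the base cases are the theorem. They are not a finite list of checkable examples: part (1) requires, for \emph{every} pair $(r,s)$ with $s \geq r^2/4$, that the full set of $\binom{r+3}{3}$ functions $\psi_{ijk}$ be tropically independent, and part (2) requires, for every $s$, a tropically independent subset of size $3d-g+1$ when $r = s+1$. Neither infinite family is covered by Sections~\ref{Sec:Canonical}--\ref{Sec:SmallRank}, which treat only the canonical divisor and $r \leq 5$; both are proved in Section~\ref{Sec:Results} by uniform combinatorial arguments that your proposal does not gesture at. Concretely, for part (1) the paper fixes the rectangular tableau, bounds the slopes $\sigma_{\alpha s}$ of $\theta = \min_I \psi_I$ block by block ($\sigma_{\alpha s} \leq 3r+s-2\alpha$, with Lemmas~\ref{Lemma:FirstColumn} and \ref{Lemma:SecondColumn} as the first two steps), and shows that a violation would force more than $s+r+1$ pairwise-disjoint nonempty sets of $\sigma$-permissible functions $\psi_{i\alpha k}$ on a single block, contradicting the count $(\alpha+1)(r+1-\alpha) \leq (r+2)^2/4 \leq s+r+1$ --- this is where the hypothesis $s \geq r^2/4$ enters, and nothing in your write-up explains what role that inequality plays. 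For part (2) the base case requires a specific choice of $\cA$ (the $\psi_{ijk}$ with some index in $\{0,1,r-1,r\}$, of cardinality exactly $\binom{r+3}{3} - \binom{r-1}{3} = 3d-g+1$) and a descending slope bound $\sigma_{(\alpha+1)s} \leq 4r-4-2\alpha$. Without these constructions, or some substitute for them, the proposal establishes nothing beyond the (correct, but already stated in the paper) observation that the two inductions preserve the injective and surjective ranges respectively.
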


Figure \ref{Fig:Range} illustrates the cases covered by Theorem \ref{thm:mainthm} when $\rho=0$.  One computes that $(r, s, 0, 3)$ is in the injective range exactly when
\[
s \geq \frac{(r+3)(r+2)(r+1)-6(3r+1)}{6(2r-1)},
\]
and the dotted curve represents the boundary between the injective range and the surjective range.   The cases covered by (1) are in the injective range, and the cases covered by (2) are in the surjective range.

\begin{figure}[h!]
\begin{tikzpicture}
\draw[->] (0,0)--(0,7.25) node[above] {$s$};
\draw[->] (0,0)--(7.25,0) node[right] {$r$};
\fill[fill=gray] (0,0) -- plot [domain=3:9] ({(\x-3)},{(\x-1)-1}) -- (6,0);
\fill[fill=gray] (6,0) -- plot [domain=6:7] ({\x},{7}) -- (7,0);
\draw[domain=3:9] plot ({(\x-3)},{(\x-1)-1});
\fill[fill=gray] (0,5/4) -- plot [domain=3:5.656] ({(\x-3)},{(\x^2/4)-1}) -- (0,7);
\draw[domain=3:5.656,smooth,variable=\x] plot ({(\x-3)},{(\x^2/4)-1});
\draw[domain=3:7.27,dotted,variable=\x] plot ({(\x-3)},{(((\x+3)*(\x+2)*(\x+1)-18*\x-6)/(12*\x-6))-1});
\draw (0,-0.5) node {3};
\draw (7,-0.5) node {10};
\draw (-0.5,0) node {1};
\draw (-0.5,7) node {8};
\draw (6,7.5) node {$s=r-1$};
\draw (2.656,7.5) node {$s=\frac{r^2}{4}$};
\end{tikzpicture}
\caption{Cases of the maximal rank conjecture covered by Theorem \ref{thm:mainthm} for $\rho=0$ and $m=3$.}
\label{Fig:Range}
\end{figure}

We can improve Theorem~\ref{thm:mainthm} by using explicit computations to prove additional cases in the unshaded region and then bootstrapping via our inductive methods.  Note in particular that, for fixed $r$ and $m$, Theorems~\ref{thm:injectiveinduction} and \ref{thm:surjectiveinduction} reduce the tropical maximal rank conjecture to finitely many cases.  For $m = 3$ and small $r$, we can carry out the necessary computations by hand.

\begin{theorem} \label{thm:smallrank}
The tropical maximal rank conjecture holds for $m = 3$ and $r \leq 4$.
\end{theorem}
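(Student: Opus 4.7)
The plan is to reduce the tropical maximal rank conjecture at $m = 3$ and $r \in \{3,4\}$ to a finite list of base cases, then verify each one by direct combinatorial computation. The main ingredients are Theorem~\ref{thm:mainthm}, which already covers all parameters with $s \geq r^2/4$ or with $\rho = 0$ and $r \geq s+1$, together with Theorems~\ref{thm:injectiveinduction} and~\ref{thm:surjectiveinduction}, which together cut the remaining parameter space down to finitely many cases for each fixed $r$ and $m$.

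First I would enumerate, for $r = 3$ and $r = 4$ separately, the ``boundary'' injective-range parameters not covered by Theorem~\ref{thm:mainthm}(1): for $r = 3$ these are $(s,\rho) = (1,4)$ and $(2,2)$, and for $r = 4$ the analogous short list of boundary points with $s \in \{1,2,3\}$. Because the injective range is closed under increasing $s$ or $\rho$, Theorem~\ref{thm:injectiveinduction} then propagates each verified boundary case to all larger $(s,\rho)$ in the injective range. Next I would enumerate the surjective-range cases with $\rho \geq 1$ not covered by Theorem~\ref{thm:mainthm}(2); for fixed $r$ and $m = 3$ the inequality $\binom{r+3}{3} \geq 3d - g + 1$ bounds both $s$ and $\rho$, so this list is finite. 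Moreover, Theorem~\ref{thm:surjectiveinduction} lets me promote verified surjective cases from $r = 3$ (where $s \leq 3$) up to the corresponding surjective cases at $r = 4$, trimming the extra $r = 4$ computations.

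For each base case, the verification follows the template of~\cite{MRC}: work on a chain of $g$ loops with generic edge lengths, choose a vertex-avoiding divisor class of rank $r$ and degree $d$, and exhibit an explicit subset $\cA \subset \{\psi_I : \vert I \vert = 3\}$ of the predicted size, together with a distinguished loop or bridge for each $\psi_I \in \cA$ on which it attains a unique pointwise minimum. Once such a scheme is found, tropical independence of $\cA$ is automatic.

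The main obstacle will be designing, case by case, an assignment of multisets $I$ to regions of the chain of loops that simultaneously has the correct cardinality---$\binom{r+3}{3}$ in the injective range and $3d - g + 1$ in the surjective range---and satisfies the unique-minimum condition. The injective boundary cases are especially rigid, since every multiset must be accounted for and there is no slack to throw away troublesome $\psi_I$'s; the worst surjective cases are delicate in a complementary way, requiring a careful choice of which multisets to omit so that the remaining $\psi_I$'s can be distributed across the loops and bridges with unique minima. Once these explicit hand computations are carried out for every required base case, repeated application of Theorems~\ref{thm:injectiveinduction} and~\ref{thm:surjectiveinduction} closes off the infinitely many remaining cases at $r \in \{3,4\}$.
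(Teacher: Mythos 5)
Your reduction to finitely many base cases via Theorems~\ref{thm:mainthm}, \ref{thm:injectiveinduction}, and \ref{thm:surjectiveinduction} is exactly the strategy of the paper: for $r=3$ one is left with $s=1$ and $\rho\leq 3$ (the case $\rho=0$ being Theorem~\ref{thm:canonical}, the rest done by hand), and for $r=4$ with an explicit list of thirteen pairs $(s,\rho)$ with $s\leq 3$, $\rho\geq 1$, and $7s+2\rho<22$. (Your specific boundary points for $r=3$ are off --- the minimal injective-range cases are $(s,\rho)=(1,3)$ and $(2,0)$, not $(1,4)$ and $(2,2)$ --- but that is bookkeeping, and your observation that surjective induction from $r=3$ trims some, though not all, of the $r=4$ list is sound.)

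The genuine gap is in your verification template for the base cases. Exhibiting, for each $\psi_I\in\cA$, a distinguished loop or bridge on which $\psi_I$ attains the pointwise minimum uniquely does \emph{not} make tropical independence ``automatic'': independence requires that for \emph{every} choice of additive constants $b_I$ the minimum of $\{\psi_I+b_I\}$ fail to be attained twice at some point, and an adversarial choice of the $b_I$ can destroy any particular assignment you construct with the constants fixed. The paper's base-case arguments run in the opposite direction: one \emph{assumes} a dependence, sets $\theta=\min_I\{\psi_I\}$ with the (unknown) constants absorbed, and uses Lemma~\ref{Lem:LongBridges}, Proposition~\ref{Prop:TwoChipsPerLoop}, and Corollary~\ref{Cor:TwoChips} to constrain the slopes $\sigma_t$ of $\theta$ along the bridges --- e.g.\ $\sigma_t\leq\sigma_{t-1}+1$ with equality only under strong permissibility restrictions --- eventually forcing a contradiction such as $\delta_t\leq 1$ on some loop, a negative terminal slope, or too few $\sigma$-permissible functions to cover a block. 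Without this mechanism, and without actually carrying out the explicit computations for each of the roughly fifteen remaining $(s,\rho)$ pairs (which constitute essentially all of the content of the theorem), the proposal is a plan rather than a proof.
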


As mentioned above, each case of the tropical maximal rank conjecture implies the corresponding case of the classical maximal rank conjecture.  The cases of the maximal rank conjecture in the injective range given by part (1) of Theorem~\ref{thm:mainthm} follow from Larson's maximal rank theorem for sections of curves \cite{Larson12}.  Indeed, Larson shows that, when $s \geq r^2/4$ and $X \subset \PP^r$ is general, any cubic hypersurface that contains a general hyperplane section of $X$ must contain the hyperplane.  It follows easily that $X$ is not contained in any cubic hypersurface, and hence $\mu_3$ is injective.

If $(g,r,d,m)$ is in the surjective range, then so is $(g,r,d,m+1)$, and the classical maximal rank conjecture for $(g,r,d,m)$ implies the maximal rank conjecture for $(g,r,d,m+1)$ (see, e.g. the proof of \cite[Theorem~1.2]{MRC}).  Unfortunately, we do not know how to prove the corresponding induction on $m$ for the tropical maximal rank conjecture in general.  Nevertheless, given that the maximal rank conjecture holds for $m =2$ and using the inductive statement for the classical maximal rank conjecture, it follows that the maximal rank conjecture holds for $(g,r,d,m)$, for all $m$, provided that it holds for $(g,r,d,3)$, and this is in the surjective range.

\begin{corollary}
The maximal rank conjecture holds for $(g,r,d,m)$, for all $m$, provided that either
\begin{enumerate}
\item $(g,r,d,3)$ is in the surjective range and $r \leq 4$, or
\item $\rho (g,r,d)= 0$ and $r \geq s+1$.
\end{enumerate}
\end{corollary}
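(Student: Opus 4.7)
The plan is to reduce both cases of the corollary to the tropical maximal rank conjecture at $m = 3$, upgrade to the classical maximal rank conjecture via the specialization argument of \cite[Proposition~4.7]{MRC}, and then invoke the multiplicative bootstrap $m \to m+1$ in the surjective range (as quoted in the paragraph preceding the corollary from the proof of \cite[Theorem~1.2]{MRC}) to obtain every $m \geq 3$. The case $m = 1$ is vacuous, and $m = 2$ is covered by the previous work \cite{MRC} in which the authors establish the maximal rank conjecture for $m = 2$.

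For case (1), the hypothesis $r \leq 4$ lets me apply Theorem~\ref{thm:smallrank} directly, yielding the tropical maximal rank conjecture at $(g,r,d,3)$; specialization then gives the classical maximal rank conjecture at $(g,r,d,3)$. Since $(g,r,d,3)$ lies in the surjective range by assumption, so does $(g,r,d,m)$ for every $m \geq 3$, and the bootstrap propagates surjectivity of $\mu_m$ from $m = 3$ to all larger $m$. Combined with the $m = 1, 2$ cases, this settles case (1).

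Case (2) proceeds along the same lines, with two preliminary verifications. First, Theorem~\ref{thm:mainthm}(2) supplies the tropical maximal rank conjecture at $(r, s, 0, 3)$, which again specializes to the classical version. Second, I must confirm that $(g,r,d,3)$ actually lies in the surjective range under the hypothesis $r \geq s + 1$, so that the bootstrap applies. This is a direct numerical check using $g = rs$ and $d = rs + r - s$; the inequality $\binom{r+3}{3} \geq 3d - g + 1$ reduces to a polynomial inequality in $r$ and $s$ that is easily seen to hold throughout the region $s \leq r - 1$. Once this is in hand, the bootstrap argument of case (1) applies verbatim.

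There is no real obstacle: the serious combinatorial content lives entirely inside Theorems~\ref{thm:mainthm} and \ref{thm:smallrank}, and the corollary is a formal composition of those results with the tropical-to-classical specialization and the standard surjective-range bootstrap. The only step requiring any computation at all is the elementary check that the parameters in case (2) sit in the surjective range for $m = 3$.
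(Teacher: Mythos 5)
Your proposal is correct and follows exactly the route the paper intends: the corollary is a formal consequence of Theorem~\ref{thm:smallrank} (case 1) and Theorem~\ref{thm:mainthm}(2) (case 2), combined with the tropical-to-classical specialization of \cite[Proposition~4.7]{MRC}, the $m=2$ case from \cite{MRC}, and the surjective-range bootstrap $m \to m+1$ described in the paragraph preceding the corollary. Your explicit check that $(g,r,d,3)$ lies in the surjective range when $\rho=0$ and $s \leq r-1$ (the inequality $\binom{r+3}{3} \geq 2rs+3r-3s+1$) is the one small verification the paper leaves implicit, and it does hold.
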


\noindent These cases of the maximal rank conjecture appeared previously in \cite{Ballico09, Ballico12c}, over a field of characteristic zero; the present paper gives an independent and characteristic free proof.

The canonical divisor (i.e. the case where $r = g-1$ and $d = 2g-2$) is in the surjective range for all $m$, and in this special case we do manage to give an inductive argument starting from $m = 2$ to prove the tropical maximal rank conjecture for all $m$.

\begin{theorem} \label{thm:canonical}
Suppose $r = g-1$ and $d = 2g -2$.  Then the tropical maximal rank conjecture holds for $(g,r,d,m)$.
\end{theorem}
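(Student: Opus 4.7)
The plan is to argue by induction on $m$, with the base case $m = 2$ supplied by the main result of \cite{MRC} specialized to the canonical parameters. First I verify the setup: for $r = g-1$ and $d = 2g-2$ one has $s = 1$ and $\rho = 0$, and the target size for a tropically independent subset $\cA_m \subset \{\psi_I \ | \ |I| = m\}$ is $md - g + 1 = (2m-1)(g-1)$. A direct check confirms $\binom{g-1+m}{m} \geq (2m-1)(g-1)$ for $r = g-1 \geq 3$ and $m \geq 1$, so the parameters $(g, g-1, 2g-2, m)$ always lie in the surjective range, and the task reduces to producing such a $\cA_m$ on the chain of $g$ loops with a vertex-avoiding canonical divisor class.

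For the inductive step, assume $\cA_m$ of the required size has been constructed. Fix an index $j_0 \in \{0, \ldots, g-1\}$ and form
\[
\cA_m' \ := \ \{\psi_{I \cup \{j_0\}} \ | \ \psi_I \in \cA_m\} \ \subset \ \{\psi_J \ | \ |J| = m+1\}.
\]
Since $\psi_{I \cup \{j_0\}} = \psi_I + \psi_{j_0}$ differs from $\psi_I$ by a common summand, any tropical linear dependence among the elements of $\cA_m'$ descends to one among the $\psi_I \in \cA_m$; hence $\cA_m'$ is tropically independent of size $(2m-1)(g-1)$. To reach the required cardinality $(2m+1)(g-1) = |\cA_m'| + 2(g-1)$, one must adjoin $d = 2(g-1)$ further elements of $\{\psi_J \ | \ |J| = m+1\}$ while preserving tropical independence.

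The construction of these $2(g-1)$ extra elements is the heart of the argument, and exploits the explicit combinatorics of the canonical divisor on the chain of loops, in particular the symmetry $\phi_i \leftrightarrow \phi_{g-1-i}$ coming from the involution reversing the chain. I would select one pair of multi-indices per loop of the chain, concentrated symmetrically at the two ends, so that each new $\psi_J$ attains its minimum on a specific loop in a region where every element of the translated set $\cA_m'$ is strictly larger. This provides $2(g-1)$ well-localized additional functions, naturally matching the count $d = 2(g-1)$ and respecting the canonical symmetry of the setup.

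The main obstacle is certifying that no nontrivial tropical linear combination of the enlarged set vanishes. This requires a loop-by-loop analysis of where each $\psi_J$ attains its minimum, using the Shape Lemma for Minima together with the local piecewise-linear structure of the canonical sections on each loop. The expected endgame is that any putative dependence in the enlarged set must, on each loop, force the coefficients of the newly-adjoined pair of elements to be trivial; the remaining combination then provides a dependence among $\cA_m'$, contradicting the inductive hypothesis. Once this local analysis is carried out, the induction closes and the tropical maximal rank conjecture for the canonical divisor follows for all $m \geq 2$.
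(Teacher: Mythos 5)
Your reductions at the start are fine: the canonical parameters give $s=1$, $\rho=0$, the target size is indeed $md-g+1=(2m-1)(g-1)$, and these parameters lie in the surjective range. The observation that adding a fixed common index $j_0$ to every multiset in $\cA_m$ preserves tropical independence is also correct, since $\psi_{j_0}$ is a common additive summand. But the proof has a genuine gap exactly where you say the ``heart of the argument'' lies: you never construct the $2(g-1)$ additional functions, and you never prove that the enlarged set is tropically independent. The sketch you give of the endgame is also not how tropical independence works: there are no coefficients to ``force to be trivial.'' A tropical dependence is a choice of additive constants $b_J$ making the minimum occur everywhere at least twice, and even if you could show that each newly adjoined $\psi_J$ fails to achieve the minimum anywhere, you could not simply delete it and conclude a dependence among $\cA_m'$ --- you would have to re-verify that the minimum of the \emph{remaining} functions still occurs twice at every point. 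Carrying out such an induction on $m$ for general $g$ is precisely the difficulty the paper flags: the authors state explicitly that they do not know how to prove the induction on $m$ for the tropical conjecture in general.

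The paper's proof avoids this by inducting on the genus rather than on $m$ for general $g$. Since $s=1\leq r$ and the parameters are in the surjective range, Theorem~\ref{thm:surjectiveinduction} shows that the case $(r,1,0,m)$ implies $(r+1,1,0,m)$, so everything reduces to the single base case $g=3$. There the set $\cA_m$ of $4m-2$ functions on the chain of three loops is written down explicitly by a recursion in $m$ (append a $1$ to each multiset in $\cA_{m-1}$ and adjoin $\psi_{0^{(m)}},\psi_{0^{(m-1)}2},\psi_{02^{(m-1)}},\psi_{2^{(m)}}$), and independence is verified by a concrete slope computation: one shows $\sigma_1\leq m$ and $\sigma_2\geq 2m$, hence $\delta_2\leq 0$, contradicting Corollary~\ref{Cor:TwoChips}. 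If you want to salvage your approach, the cleanest fix is to first invoke Theorem~\ref{thm:surjectiveinduction} to reduce to $g=3$, where your ``common summand plus $2(g-1)=4$ extra functions'' template becomes the paper's explicit recursion and the independence check is a finite computation.
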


\noindent Equivalently, in terms of the parameters $(r,s,\rho,m)$, the tropical maximal rank conjecture holds for $s = 1$ and $\rho = 0$.

This paper, like many other recent tropical geometry papers such as \cite{BakerNorine07, BakerNorine09, HMY12, AMSW, ABKS, Len14}, is devoted to essentially combinatorial constructions on graphs, drawing inspiration and direction from analogous constructions on algebraic curves, with a view toward applications in algebraic geometry via lifting theorems and specialization.  The structure of the paper is as follows.  We briefly review the basic setup, with chains of loops, vertex avoiding divisors, and tropical independence of distinguished sections in Section \ref{Section:TheGraph}.  In Section \ref{Sec:Permit}, we present a mild generalization of the notion of permissible functions, which was one of the key combinatorial tools in \cite{MRC}.  In Section \ref{Sec:Induct}, we discuss inductive methods for tropical independence results, proving Theorems~\ref{thm:injectiveinduction} and \ref{thm:surjectiveinduction}.  We apply these methods first to the canonical divisor in Section~\ref{Sec:Canonical}, proving Theorem~\ref{thm:canonical}, and then to prove Theorem~\ref{thm:mainthm} in Section~\ref{Sec:Results}.  Finally, in Section \ref{Sec:SmallRank}, we prove enough additional cases for $m = 3$ and $r \leq 4$ by explicit computation to deduce Theorem~\ref{thm:smallrank}.

\medskip

\noindent \textbf{Acknowledgments.}  We have benefited from a number of helpful conversations with colleagues during the preparation of this work, and wish to thank, in particular, T.~Feng, C.~Fontanari, E.~Larson, and L.~Sauermann.  The work of DJ is partially supported by NSF DMS-1601896 and that of SP by NSF CAREER DMS-1149054.

\section{Combinatorics of the chain of loops}
\label{Section:TheGraph}

In this section, we recall the setup from \cite{MRC}, including definitions of all terms appearing in the statement of the tropical maximal rank conjecture.  The material of this section is developed in more detail in that paper and its precursors \cite{tropicalBN, tropicalGP}, to which we refer the reader for further details.

Let $\Gamma$ be a chain of loops with bridges, as pictured in Figure \ref{Fig:TheGraph}.  Note that $\Gamma$ has $2g+2$ vertices, one on the lefthand side of each bridge, which we label $w_0, \ldots , w_g$, and one on the righthand side of each bridge, which we label $v_1, \ldots, v_{g+1}$.  There are two edges connecting the vertices $v_k$ and $w_k$, the top and bottom edges of the $k$th loop, whose lengths are denoted $\ell_k$ and $m_k$, respectively, as shown schematically in Figure~\ref{Fig:TheGraph}.
\begin{figure}[h!]
\begin{tikzpicture}

\draw (-2.7,-.5) node {\footnotesize $w_0$};
\draw [ball color=black] (-2.5,-0.3) circle (0.55mm);
\draw (-2.5,-.3)--(-1.93,-0.25);
\draw [ball color=black] (-1.93,-0.25) circle (0.55mm);
\draw (-1.95,-0.5) node {\footnotesize $v_1$};
\draw (-1.5,0) circle (0.5);
\draw (-1,0)--(0,0.5);
\draw [ball color=black] (-1,0) circle (0.55mm);
\draw (-0.85,0.3) node {\footnotesize $w_1$};
\draw (0.7,0.5) circle (0.7);
\draw (1.4,0.5)--(2,0.3);
\draw [ball color=black] (1.4,0.5) circle (0.55mm);
\draw [ball color=black] (0,0.5) circle (0.55mm);
\draw (-0.2,0.75) node {\footnotesize $v_2$};
\draw (2.6,0.3) circle (0.6);
\draw (3.2,0.3)--(3.87,0.6);
\draw [ball color=black] (2,0.3) circle (0.55mm);
\draw [ball color=black] (3.2,0.3) circle (0.55mm);
\draw [ball color=black] (3.87,0.6) circle (0.55mm);
\draw (4.5,0.3) circle (0.7);
\draw (5.16,0.5)--(5.9,0);
\draw (6.4,0) circle (0.5);
\draw [ball color=black] (5.16,0.5) circle (0.55mm);
\draw (5.48,0.74) node {\footnotesize $w_{g-1}$};
\draw [ball color=black] (5.9,0) circle (0.55mm);
\draw [ball color=black] (6.9,0) circle (0.55mm);
\draw (5.7,-.2) node {\footnotesize $v_g$};
\draw (7.15,-.2) node {\footnotesize $w_g$};
\draw (6.9,0)--(7.5,0.2);
\draw [ball color=black] (7.5,0.2) circle (0.55mm);
\draw (7.94,0.1) node {\footnotesize $v_{g+1}$};

\draw [<->] (3.35,0.475)--(3.8,0.7);
\draw [<->] (3.3,0.4) arc[radius = 0.715, start angle=10, end angle=170];
\draw [<->] (3.3,0.2) arc[radius = 0.715, start angle=-9, end angle=-173];

\draw (3.5,0.725) node {\footnotesize$n_k$};
\draw (2.5,1.25) node {\footnotesize$\ell_k$};
\draw (2.75,-0.7) node {\footnotesize$m_k$};
\end{tikzpicture}
\caption{The graph $\Gamma$.}
\label{Fig:TheGraph}
\end{figure}
For $1 \leq k \leq g+1$ there is a bridge connecting $w_k$ and $v_{k+1}$, which we refer to as the $k$th bridge $\beta_k$, of length $n_k$.  Throughout, we assume that $\Gamma$ has admissible edge lengths in the following sense.

\begin{definition} \label{Def:Admissible}
The graph $\Gamma$ has \emph{admissible edge lengths} if
\[
4g m_k < \ell_k \ll \min \{ n_{k-1}, n_k\} \mbox{ for all $k$},
\]
there are no nontrivial linear relations $c_1 m_1 + \cdots + c_g m_g = 0$ with integer coefficients of absolute value at most $g + 1$, and
\begin{equation} \label{eq:blocks}
\sum_{i=\alpha s+1}^{(\alpha+1)s} \ell_i + \sum_{i=\alpha s+1}^{(\alpha+1)s-1} n_i \ll \min \{ n_{\alpha s}, n_{(\alpha+1)s} \},
\end{equation}
for every integer $\alpha \leq r$.
\end{definition}

\begin{remark}
The only difference between this notion of admissible edge lengths and \cite[Definition~4.1]{MRC} is the addition of the last condition (\ref{eq:blocks}), which can be thought of in the following way.  We divide the first $g-\rho$ loops of $\Gamma$ into $r+1$ \emph{blocks} consisting of $s$ loops each, such that the bridges separating these blocks are much longer than the blocks themselves.  This allows us to place additional restrictions on which functions can obtain the minimum at some point in a block, beyond those that come from each individual loop; see Section~\ref{Sec:Permit} and \cite[Section~6]{MRC}.  Figure \ref{Fig:Blocks} illustrates the decomposition of a chain of 12 loops into three blocks of four loops each.
\end{remark}

\begin{figure}[h!]
\begin{tikzpicture}
\matrix[column sep=0.5cm] {

\begin{scope}[grow=right, baseline]

\draw [ball color=black] (-1.25,-0) circle (0.55mm);
\draw (-1.25,0)--(-0.25,0);
\draw (0,0) circle (.25);
\draw (0.25,0)--(0.5,0);
\draw (0.75,0) circle (.25);
\draw (1,0)--(1.25,0);
\draw (1.5,0) circle (.25);
\draw (1.75,0)--(2,0);
\draw (2.25,0) circle (.25);

\draw (2.5,0)--(3.5,0);
\draw (3.75,0) circle (.25);
\draw (4,0)--(4.25,0);
\draw (4.5,0) circle (.25);
\draw (4.75,0)--(5,0);
\draw (5.25,0) circle (.25);
\draw (5.5,0)--(5.75,0);
\draw (6,0) circle (.25);

\draw (6.25,0)--(7.25,0);
\draw (7.5,0) circle (.25);
\draw (7.75,0)--(8,0);
\draw (8.25,0) circle (.25);
\draw (8.5,0)--(8.75,0);
\draw (9,0) circle (.25);
\draw (9.25,0)--(9.5,0);
\draw (9.75,0) circle (.25);
\draw (10,0)--(11,0);
\draw [ball color=black] (11,0) circle (0.55mm);

\end{scope}

\\};
\end{tikzpicture}
\caption{The graph $\Gamma$, with three blocks of four loops, when $g=12$, $r=2$, and $s=4$.}
\label{Fig:Blocks}
\end{figure}

Let $u_k$ be the midpoint of $\beta_k$, and decompose $\Gamma$ into locally closed subgraphs $\gamma_0, \ldots, \gamma_{g+1}$, as follows.  The subgraph $\gamma_0$ is the half-open interval $[w_0, u_0)$.  For $1 \leq i \leq g$, the subgraph $\gamma_i$, which includes the $i$th loop of $\Gamma$, is the union of the two half-open intervals $[u_{i-1}, u_i)$, which contain the top and bottom edges of the $i$th loop, respectively.  Finally, the subgraph $\gamma_{g+1}$ is the closed interval $[u_g, v_{g+1}]$.  We further write $\gamma_i^\circ$ for the $i$th embedded loop in $\Gamma$, which is a closed subset of $\gamma_i$, for $1 \leq i \leq g$.  The decomposition
\[
\Gamma = \gamma_0 \sqcup \cdots \sqcup \gamma_{g+1}
\]
is illustrated by Figure \ref{Fig:Decomposition}.  For $a \leq b$, we let $\Gamma_{[a,b]}$ be the locally closed, connected subgraph
\[
\Gamma_{[a,b]} = \gamma_a \sqcup \cdots \sqcup \gamma_b.
\]

\begin{figure}[h!]
\begin{tikzpicture}
\matrix[column sep=0.5cm] {

\begin{scope}[grow=right, baseline]

\draw (-6.75, -.3) node {$w_0$};
\draw [ball color=black] (-6.5,0) circle (0.55mm);
\draw (-6.5,0)--(-5.75,0);
\draw (-6,1.25) node {$\gamma_0$};
\draw [ball color=white] (-5.75,0) circle (0.55mm);

\draw (-5.3, -.3) node {$u_0$};
\draw [ball color=black] (-5.25,0) circle (0.55mm);
\draw (-5.25,0)--(-4.75,0);
\draw (-4,0) circle (.75);
\draw (-4,1.25) node {$\gamma_1$};
\draw (-3.25,0)--(-2.75,0);
\draw [ball color=white] (-2.75,0) circle (0.55mm);

\draw [ball color=black] (-2.25,0) circle (0.55mm);
\draw (-2.3, -.3) node {$u_1$};
\draw (-2.25,0)--(-1.75,0);
\draw (-1,0) circle (.75);
\draw (-1,1.25) node {$\gamma_2$};
\draw (-.25,0)--(.25,0);
\draw [ball color=white] (.25,0) circle (0.55mm);

\draw(.75, 0) node {$\cdots$};

\draw [ball color=black] (1.25,0) circle (0.55mm);
\draw (1.2, -.3) node {$u_{g-1}$};
\draw (1.25,0)--(1.75,0);
\draw (2.5,0) circle (.75);
\draw (2.5,1.25) node {$\gamma_g$};
\draw (3.25,0)--(3.75,0);
\draw [ball color=white] (3.75,0) circle (0.55mm);

\draw [ball color=black] (4.25,0) circle (0.55mm);
\draw (4.25, 0)--(5,0);
\draw (4.625,1.25) node {$\gamma_{g+1}$};
\draw [ball color=black] (5,0) circle (0.55mm);
\draw (5.25, -.3) node {$v_{g+1}$};

\end{scope}

\\};
\end{tikzpicture}
\caption{Decomposition of the graph $\Gamma$ into locally closed pieces $\{\gamma_k\}$.}
\label{Fig:Decomposition}
\end{figure}

We write $\PL (\Gamma)$ for the set of continuous, piecewise linear functions on $\Gamma$ with integral slope.  For any divisor $D$ on $\Gamma$, we write
\[
R(D) : = \{ \psi \in \PL (\Gamma) \vert \ddiv \psi + D \geq 0 \}
\]
for the complete linear series of the divisor $D$.

The special divisor classes on $\Gamma$, i.e. the classes of degree $d$ and rank greater than $d-g$, are classified in \cite{tropicalBN}, where it is show that the Brill-Noether locus $W^r_d (\Gamma)$ parametrizing divisor classes of degree $d$ and rank $r$ is a union of $\rho$-dimensional tori.  These tori are in bijection with certain types of \emph{lingering lattice paths} in $\ZZ^r$.  These are sequences $p_0, \ldots, p_g$ starting and ending at
\[
p_0 = p_g = (r, r - 1, \ldots, 1) ,
\]
such that, for all $i$, $p_i - p_{i-1}$ is equal to 0, a standard basis vector $e_j$, or the vector $(-1, \ldots , -1)$, and satisfying
\[
p_i(0) > \cdots > p_i(r-1) > 0
\]
for all $i$.

The lingering lattice paths described above are in bijection with rectangular tableaux of size $(r+1) \times (g-d+r)$ with alphabet $1, \ldots , g$.  This bijection is given by placing $i$ in the $j$th column when $p_i - p_{i-1} = e_j$, and placing $i$ in the last column when $p_i - p_{i-1} = (-1,\ldots,-1)$.  When $p_i - p_{i-1} = 0$, the number $i$ is omitted from the tableau.

An open dense subset of the special divisor classes of degree $d$ and rank $r$ on $\Gamma$ consists of \emph{vertex avoiding} divisors.  We refer the reader to \cite[Definition~2.3]{LiftingDivisors} for a definition.  If $D$ is a divisor of rank $r$ on $\Gamma$ whose class is vertex avoiding, then there is a unique effective divisor $D_i \sim D$ such that $\deg_{w_0}(D_i) = i$ and $\deg_{v_{g+1}}(D_i) = r-i$.  Throughout, we will write $D$ for a $w_0$-reduced divisor on $\Gamma$ of degree $d$ and rank $r$ whose class is vertex avoiding, and $\psi_i$ for a piecewise linear function on $\Gamma$ such that $D + \ddiv (\psi_i) = D_i$.  Note that $\psi_i$ is uniquely determined up to an additive constant, and for $i<r$ the slope of $\psi_i$ along the bridge $\beta_j$ is $p_j(i)$.   The function $\psi_r$ is constant, so we set $p_j(r) = 0$ for all $j$.

For a multiset $I \subset \{ 0, \ldots, r \}$ of size $m$, let $D_I = \sum_{i \in I} D_i$ and let $\psi_I$ be a piecewise linear function such that $mD + \ddiv \psi_I = D_I$.  By construction, since $\ddiv \psi_I + mD = D_I$ is effective, the function $\psi_I$ is in $R(mD)$ and agrees with $\sum_{i \in I} \psi_i$ up to an additive constant.

Before stating our combinatorial conjecture, we recall the definition of tropical independence from \cite{tropicalGP}.

\begin{definition}
A set of piecewise linear functions $\{ \psi_0, \ldots, \psi_r \}$ on a metric graph $\Gamma$ is \emph{tropically dependent} if there are real numbers $b_0, \ldots, b_r$ such that for every point $v$ in $\Gamma$ the minimum
\[
\min \{\psi_0(v) + b_0, \ldots, \psi_r(v) + b_r \}
\]
occurs at least twice.  If there are no such real numbers then $\{ \psi_0, \ldots, \psi_r \}$ is \emph{tropically independent}.
\end{definition}

\begin{remark}
Since the functions $\psi_I$ are determined only up to an additive constant, we often suppress the constants $b_I$ in the definition of tropical dependence and assume that the minimum of the set $\{ \psi_I (v) \}$ occurs at least twice at every point $v \in \Gamma$.
\end{remark}

\section{Permissible functions}
\label{Sec:Permit}

Our strategy for proving cases of the tropical maximal rank conjecture will proceed by contradiction.  We choose a set $\cA$ of size $\min\{ {r+m \choose m}, md-g+1\}$ and let $\theta$ be the piecewise linear function
\[
\theta = \min_{I \in \cA} \{ \psi_I \},
\]
which is in $R(mD)$, with $\Delta$ the corresponding effective divisor
\[
\Delta = mD + \ddiv \theta.
\]
We will assume that the minimum occurs everywhere at least twice and use this to deduce properties of the function $\theta$ and the corresponding divisor $\Delta$, and ultimately obtain a contradiction.

For any function $\psi \in \PL (\Gamma)$, we let $\sigma_k \psi$ denote the slope of $\psi$ at $u_k$ going to the right.  So, for example, we have
\[
\sigma_k \psi_I = \sum_{i\in I} p_k (i).
\]
For ease of notation, we write
\[
\sigma_i = \sigma_i \theta \mbox{ and } \delta_i = \deg(\Delta|_{\gamma_i}).
\]

The nonnegative integer vector $\delta = (\delta_0, \ldots, \delta_{g+1})$ restricts the functions $\psi_I$ that can obtain the minimum on a given loop of $\Gamma$, as observed in \cite[Section~6]{MRC}.  Here we restate this observation in terms of the vector of slopes $\sigma = (\sigma_0 , \ldots , \sigma_{g+1})$, which makes the following definition and its basic properties particularly transparent.  We also give additional restrictions on functions that can obtain the minimum on a given block of loops, using condition (\ref{eq:blocks}) in Definition~\ref{Def:Admissible}.

\begin{definition}
Let $I \subset \{0, \ldots, r \}$ be a multiset of size $m$.  We say that $\psi_I$ is $\sigma$\emph{-permissible} on $\gamma_k^\circ$ if
\[
\sigma_{k-1} \psi_I \leq \sigma_{k-1} \mbox{ and } \sigma_k \psi_I \geq \sigma_k .
\]
Similarly, we say that $\psi_I$ is $\sigma$\emph{-permissible} on a block $\Gamma_{[\alpha s+1,(\alpha+1)s]}$ if
\[
\sigma_{\alpha s} \psi_I \leq \sigma_{\alpha s} \mbox{ and } \sigma_{(\alpha +1)s} \psi_I \geq \sigma_{(\alpha +1)s} .
\]
\end{definition}

Note that if a function $\psi_I$ is $\sigma$-permissible on a block, then it must be $\sigma$-permissible on some loop in that block.  On the other hand, $\psi_I$ may be $\sigma$-permissible on a loop without being $\sigma$-permissible on the block containing that loop.  The following lemma shows that both conditions are necessary for a function to obtain the minimum at some point of a loop or block.

\begin{lemma}
\label{Lem:LongBridges}
If $\psi_I (v) = \theta (v)$ for some $v \in \gamma_k^\circ$ then $\psi_I$ is $\sigma$-permissible on $\gamma_k^\circ$.  Similarly, if $\psi_I (v) = \theta (v)$ for some $v \in \Gamma_{[\alpha s+1,(\alpha+1)s]}$ then $\psi_I$ is $\sigma$-permissible on $\Gamma_{[\alpha s+1,(\alpha+1)s]}$.
\end{lemma}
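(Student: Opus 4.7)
The plan is to argue by contradiction using the admissible edge-length inequality $\ell_k \ll \min(n_{k-1}, n_k)$, and the analogous block inequality (\ref{eq:blocks}) for the second statement. The underlying idea is that an integer slope discrepancy between $\psi_I$ and $\theta$ on a bridge, amplified over the enormous bridge length, produces a positive gap $(\psi_I - \theta)$ that cannot be eroded across the much shorter loop (or block) in which $v$ lies.

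Fix $v \in \gamma_k^\circ$ with $\psi_I(v) = \theta(v)$, and suppose toward contradiction that $\sigma_{k-1}\psi_I > \sigma_{k-1}$, hence $\sigma_{k-1}\psi_I \geq \sigma_{k-1} + 1$ since slopes are integers. Along the bridge $\beta_{k-1}$, each $\psi_i$ is linear of slope $p_{k-1}(i)$, so $\psi_I$ is linear of slope $\sigma_{k-1}\psi_I$; and $\theta$, being a pointwise minimum of linear functions on this bridge, is concave there. Since the right-hand slope of $\theta$ at $u_{k-1}$ is $\sigma_{k-1}$ by definition, concavity forces $\operatorname{slope}(\theta) \leq \sigma_{k-1}$ throughout the right half $[u_{k-1}, v_k]$, hence $\operatorname{slope}(\psi_I - \theta) \geq 1$ there. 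Since $\psi_I - \theta \geq 0$ globally, integrating yields
\[
(\psi_I - \theta)(v_k) \ \geq \ (\psi_I - \theta)(u_{k-1}) + \tfrac{1}{2} n_{k-1} \ \geq \ \tfrac{1}{2} n_{k-1}.
\]

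The second step is to transport this positive gap from $v_k$ to $v$, a distance of at most $\ell_k$ inside $\gamma_k^\circ$. The slopes of each $\psi_{I'}$, and hence of $\theta$ and of $\psi_I - \theta$, are bounded in absolute value on $\gamma_k^\circ$ by a constant $C = C(r,m)$ depending only on $r$ and $m$, because each $\psi_i$ has only a bounded number of chips on the loop and its slopes there differ from $p_{k-1}(i), p_k(i)$ by a bounded amount. Therefore
\[
(\psi_I - \theta)(v) \ \geq \ \tfrac{1}{2} n_{k-1} - C\ell_k \ > \ 0
\]
by the admissibility hypothesis $\ell_k \ll n_{k-1}$, contradicting $\psi_I(v) = \theta(v)$. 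The other inequality $\sigma_k \psi_I \geq \sigma_k$ is obtained by the symmetric argument, moving right from $v$ through the loop and onto the left half of the bridge $\beta_k$, and invoking $\ell_k \ll n_k$ at the end.

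The block case follows the same two-step template, with $u_{\alpha s}$ and $u_{(\alpha+1)s}$ in place of $u_{k-1}$ and $u_k$, and with (\ref{eq:blocks}) replacing $\ell_k \ll n_{k-1}, n_k$: the first step produces a gap of size at least $\tfrac{1}{2} n_{\alpha s}$ (or $\tfrac{1}{2} n_{(\alpha+1)s}$) at the edge of the block, and the second step loses at most a constant multiple of the total internal length of $\Gamma_{[\alpha s+1,(\alpha+1)s]}$ in reaching $v$, which by (\ref{eq:blocks}) is negligible against the bridge length. The main obstacle is just the bookkeeping needed to extract the uniform slope bound $C(r,m)$ on a loop or block from the explicit description of the $\psi_i$ via the lingering lattice path associated to $D$; once this is in hand the proof is a clean application of concavity of $\theta$ on a bridge together with the scale separation built into Definition~\ref{Def:Admissible}.
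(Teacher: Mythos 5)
Your argument is correct and is essentially the intended one: the paper's proof simply cites \cite[Lemma~6.2]{MRC} for the first statement and remarks that the second follows ``by the same argument, using the fact that the bridges between blocks are much longer than the blocks themselves,'' which is precisely the scale-separation/concavity argument you spell out. The only detail worth making explicit is the uniform slope bound $C(r,m)$ on a loop or block, which follows from the fact that each $D_i$ is effective of degree $d$, so all slopes involved are bounded independently of the edge lengths.
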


\begin{proof}
The first statement is \cite[Lemma 6.2]{MRC}.  The second statement follows by the same argument, using the fact that the bridges between blocks are much longer than the blocks themselves.
\end{proof}

A consequence of Lemma \ref{Lem:LongBridges} is the following proposition, which controls the degree distribution of the divisor $\Delta$.

\begin{proposition}
\label{Prop:TwoChipsPerLoop}
Suppose that the number $t$ appears in the $i$th column of the tableau, and let $\beta$ be the minimum multiplicity of $i$ among multisets $I$ such that $\psi_I$ obtains the minimum at some point of $\gamma_t^{\circ}$.  Then $\delta_t \geq m-\beta$.
\end{proposition}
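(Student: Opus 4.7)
The plan is to combine the $\sigma$-permissibility condition from Lemma~\ref{Lem:LongBridges} with a balance identity that computes $\delta_t$ from the slope change of $\theta$ across $\gamma_t$. By Lemma~\ref{Lem:LongBridges}, every multiset $I$ for which $\psi_I$ attains the minimum at some point of $\gamma_t^{\circ}$ is $\sigma$-permissible on $\gamma_t^{\circ}$, so $\sigma_{t-1}\psi_I \leq \sigma_{t-1}$ and $\sigma_t\psi_I \geq \sigma_t$, giving $\sigma_t - \sigma_{t-1} \leq \sigma_t\psi_I - \sigma_{t-1}\psi_I$. Since $t$ lies in the $i$th column of the tableau, the corresponding step of the lingering lattice path is $p_t - p_{t-1} = e_i$ when $i<r$ and $p_t - p_{t-1} = (-1,\dots,-1)$ when $i=r$. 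Using $\sigma_k\psi_I = \sum_{j\in I} p_k(j)$ together with the convention $p_k(r)=0$, a direct computation shows that $\sigma_t\psi_I - \sigma_{t-1}\psi_I$ equals the multiplicity of $i$ in $I$ when $i<r$, and equals the multiplicity of $r$ in $I$ minus $m$ when $i=r$. Taking the minimum over all such $I$ and invoking the definition of $\beta$ yields the key slope bound $\sigma_t - \sigma_{t-1} \leq \beta$ when $i<r$, and $\sigma_t - \sigma_{t-1} \leq \beta - m$ when $i=r$.

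The remaining step is to turn this slope bound into a lower bound on $\delta_t$. The long-bridge portion of the admissibility hypothesis, $\ell_k \ll n_k$, forces any kinks of $\theta$ on $\beta_t$ to lie far from its midpoint $u_t$, so $\theta$ is linear near $u_t$, and since $D$ is vertex-avoiding the divisor $\Delta$ carries no chip at $u_t$. A Stokes-type calculation on the half-open subgraph $\gamma_t$ then gives $\deg(\ddiv\theta|_{\gamma_t}) = \sigma_{t-1} - \sigma_t$. Since a vertex-avoiding divisor of rank $r$ on the chain of loops carries exactly one chip in each $\gamma_t^{\circ}$, we have $\deg(D|_{\gamma_t}) = 1$, and therefore
\[
\delta_t \;=\; m\deg(D|_{\gamma_t}) + \deg(\ddiv\theta|_{\gamma_t}) \;=\; m + \sigma_{t-1} - \sigma_t \;\geq\; m - \beta,
\]
as claimed. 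In the case $i=r$ the same identity actually produces the stronger bound $\delta_t \geq 2m-\beta$, which is consistent with the stated inequality.

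The main technical step is verifying that the midpoint $u_t$ carries no divisorial mass of $\Delta$; this is what lets the Stokes-type identity on the half-open $\gamma_t$ give the clean formula above, and it is precisely where the long-bridge portion of admissibility is used. Everything else is a direct tableau-to-slope translation of the $\sigma$-permissibility condition, following the same scheme as the analogous computations in \cite{MRC}.
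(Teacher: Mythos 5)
Your overall strategy is the paper's: combine the $\sigma$-permissibility forced by Lemma~\ref{Lem:LongBridges} with the identity $\delta_t = \sigma_{t-1} - \sigma_t + m\deg(D|_{\gamma_t})$, and compute the slope change of $\psi_I$ across $\gamma_t$ in terms of the multiplicity of $i$ in $I$. The slope computation is correct in both cases. But there is a genuine error in the second half: it is \emph{not} true that a vertex-avoiding $w_0$-reduced divisor of degree $d$ and rank $r$ carries a chip on every loop. The divisor $D$ has $r$ chips at $w_0$ and only $d-r = g-s$ chips distributed among the $g$ loops; the loops that carry no chip are exactly those indexed by entries of the last column of the tableau, i.e.\ the loops with $i=r$ (the steps $p_t - p_{t-1} = (-1,\dots,-1)$). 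So in your case analysis, $\deg(D|_{\gamma_t}) = 1$ when $i<r$ but $\deg(D|_{\gamma_t}) = 0$ when $i=r$. With the correct count, the $i=r$ case reads $\delta_t = \sigma_{t-1}-\sigma_t \geq m-\beta$, exactly the claimed bound; your asserted ``stronger bound'' $\delta_t \geq 2m-\beta$ is false (e.g.\ Corollary~\ref{Cor:TwoChips} and the surrounding arguments rely on $\delta_t = 2$ occurring on such loops for $m \geq 2$). The two errors cancel so that your final inequality happens to be implied, but the derivation of the $i=r$ case as written rests on a false premise and needs the corrected chip count. Note that in the paper's proof the case split is phrased directly as ``$D$ has a chip on $\gamma_t$'' versus ``$D$ has no chips on $\gamma_t$,'' which is precisely the dichotomy $i<r$ versus $i=r$.

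A secondary, more minor point: the reason $\Delta$ has no chip at $u_t$ and $\theta$ is linear there is not really the length of the bridges. Each $\psi_I$ is linear on the interior of each bridge, so $\theta$ is a minimum of linear functions there, hence concave; since $\theta \in R(mD)$ and $mD$ is supported away from bridge interiors, $\ddiv\theta \geq 0$ on the open bridge, which rules out concave kinks and forces linearity. The long-bridge hypothesis is what drives Lemma~\ref{Lem:LongBridges}, not this step.
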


\begin{proof}
Note that the the degree $\delta_t$ can be determined directly from the slopes $\sigma_{t-1}$ and $\sigma_t$ along the bridges to the left and right of $\gamma_t$.  More precisely, we have
\[
\delta_t = \sigma_{t-1} - \sigma_t + m\deg (D \vert_{\gamma_t}) .
\]
It therefore suffices to show that
\[
\sigma_t \leq \sigma_{t-1} + m\deg (D \vert_{\gamma_t}) + \beta -m .
\]
To see this, let $I$ be a multiset such that $i$ has multiplicity $\beta$ in $I$, and $\psi_I$ obtains the minimum at some point of $\gamma_t^{\circ}$.  By Lemma \ref{Lem:LongBridges}, $\psi_I$ is $\sigma$-permissible, so $\sigma_t  \leq \sigma_t \psi_I$ and $\sigma_{t-1}\psi_I \leq \sigma_{t-1}$.  Note that, if $D$ has a chip on $\gamma_t$, then by definition $\sigma_t \psi_j = \sigma_{t-1} \psi_j$ for all $j \neq i$, whereas $\sigma_t \psi_i = \sigma_{t-1} \psi_i + 1$.  It follows that the slope of $\psi_I$ increases by $\beta$ from $u_{t-1}$ to $u_t$.  Similarly, if $D$ has no chips on $\gamma_t$, then the slope of $\psi_I$ decrease by $m-\beta$.  In other words,
\[
\sigma_t \psi_I = \sigma_{t-1} \psi_I + m\deg (D \vert_{\gamma_t}) + \beta -m,
\]
and the proposition follows.  \end{proof}

Proposition \ref{Prop:TwoChipsPerLoop} can be seen as a generalization of \cite[Proposition 5.2]{MRC}, which we reprove here.

\begin{corollary}
\label{Cor:TwoChips}
Suppose the minimum of $\{ \psi_I (v) \}_I$ occurs at least twice at every point $v$ in $\Gamma$.  Then $\delta_t \geq 2$ for all $t$.
\end{corollary}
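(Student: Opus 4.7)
The plan is to deduce the corollary from Proposition~\ref{Prop:TwoChipsPerLoop}. When $t$ appears in the $i$-th column of the tableau, that proposition gives $\delta_t \geq m - \beta$, where $\beta$ is the minimum multiplicity of $i$ among multisets $I$ such that $\psi_I$ attains the minimum somewhere on $\gamma_t^\circ$. Hence it suffices to show that $\beta \leq m - 2$.

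I will argue by contradiction: assume $\beta \geq m - 1$. Then every multiset $I$ whose $\psi_I$ attains $\theta$ on $\gamma_t^\circ$ is of the form $I_\ast = \{i,\ldots,i\}$ or $I_j = \{i,\ldots,i,j\}$ for some $j \in \{0, \ldots, r\} \setminus \{i\}$, so up to additive constants $\psi_{I_\ast}$ equals $m \psi_i$ and $\psi_{I_j}$ equals $(m-1) \psi_i + \psi_j$. The double-minimum hypothesis forces at least two such functions to equal $\theta$ at every point of $\gamma_t^\circ$. Two functions from this restricted family can coincide on a subinterval only if the underlying pieces $\psi_i, \psi_j$ (or $\psi_j, \psi_{j'}$) have equal slopes on that subinterval. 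This is extremely restrictive: on each of the bridges $\beta_{t-1}$ and $\beta_t$, the slopes of the $\psi_k$ are the pairwise distinct values $p_{t-1}(k)$ and $p_t(k)$ coming from the strict monotonicity $p_j(0) > p_j(1) > \cdots > p_j(r-1) > 0$, and on the top and bottom edges of $\gamma_t^\circ$ the slopes are rigidly determined by the chip pattern of the vertex-avoiding divisor $D$. Using the admissible edge-length condition $4 g m_t < \ell_t$, I would verify that no two members of the restricted family can tie along both edges of $\gamma_t^\circ$ simultaneously; this produces a point near $v_t$ or $w_t$ at which a single $\psi_I$ alone attains $\theta$, contradicting the double-minimum hypothesis.

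For loops $\gamma_t$ with $t$ not appearing in the tableau, the vertex-avoiding divisor $D$ has no chip on $\gamma_t^\circ$ and $p_{t-1} = p_t$, so every $\psi_I$ has matching bridge slopes on the two sides of $\gamma_t$. An analogous slope analysis on the two edges of the loop, again using the double-minimum hypothesis, yields $\delta_t \geq 2$ for these loops as well.

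The main obstacle is the slope bookkeeping on the top and bottom edges of $\gamma_t^\circ$: I need to verify that the slope-coincidence conditions forced by the restricted family cannot be satisfied simultaneously on both edges, under the admissible edge-length hypothesis. Once the local slope data of the $\psi_k$ on $\gamma_t^\circ$ are laid out in terms of the position of the chip of $D$, this should reduce to a finite case check driven by the strict inequalities among the $p_k(j)$.
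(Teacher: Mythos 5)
Your first half matches the paper exactly: by Proposition~\ref{Prop:TwoChipsPerLoop}, if $\delta_t \leq 1$ then every $\psi_I$ attaining the minimum somewhere on $\gamma_t^{\circ}$ must have $I = \{i^{(m)}\}$ or $I = \{i^{(m-1)}\alpha\}$. The gap is in how you propose to extract a contradiction from this restricted family. You plan to show that no two members of the family can tie along both edges of $\gamma_t^{\circ}$ simultaneously; even if verified, that does not contradict the double-minimum hypothesis, because the pair achieving the minimum is allowed to change from point to point -- one pair of functions could tie where the minimum sits on the top edge and a different pair where it sits on the bottom edge. So the ``finite case check'' you defer is not mere bookkeeping: as formulated it establishes the wrong statement, and a correct version would have to track how the achieving set varies across the loop, which is precisely the delicate local analysis (dependent on edge lengths and the chip position of $D$) that one wants to avoid.

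The paper's proof sidesteps the loop entirely and works on the bridge. Since the minimum occurs at least twice at every point, on a subinterval adjacent to $u_{t-1}$ some fixed pair $\psi_I, \psi_J$ both achieve it, hence agree there and satisfy $\sigma_{t-1}\psi_I = \sigma_{t-1}\psi_J$. But for $I = \{i^{(m-1)}\alpha\}$ and $J = \{i^{(m-1)}\beta\}$ with $\alpha \neq \beta$ these slopes are $(m-1)p_{t-1}(i) + p_{t-1}(\alpha)$ and $(m-1)p_{t-1}(i) + p_{t-1}(\beta)$, which differ because $p_{t-1}(0) > \cdots > p_{t-1}(r-1) > 0 = p_{t-1}(r)$ are pairwise distinct. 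That is the entire argument: no admissible-edge-length input, no case analysis on where the chip of $D$ sits on the loop. I would replace your analysis of the top and bottom edges with this one-line bridge-slope observation; the rigidity you correctly identified in the bridge slopes $p_{t-1}(k)$ is already enough, and the corresponding rigidity on the loop edges is both harder to use and unnecessary.
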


\begin{proof}
Suppose the minimum occurs at least twice at every point in $\Gamma$.  Then we can choose $\psi_I$ and $\psi_J$ such that both obtain the minimum at $u_{t-1}$, and $\sigma_{t-1} \psi_I = \sigma_{t-1} \psi_J$.

We now assume $\delta_t \leq 1$ and proceed to find a contradiction.  By Proposition \ref{Prop:TwoChipsPerLoop}, since $\delta_t \leq 1$, the multisets $I$ and $J$, which have size $m$, must contain the value $i$ with multiplicity at least $m-1$.  In other words, $I = \{ i^{(m-1)} \alpha \}$ and $J = \{ i^{(m-1)} \beta \}$ for some $\alpha \neq \beta$.  However, $\sigma_{t-1} \psi_{\alpha} \neq \sigma_{t-1} \psi_{\beta}$, and hence $\sigma_{t-1} \psi_I \neq \sigma_{t-1} \psi_J$, which contradicts the choice of $\psi_I$ and $\psi_J$.
\end{proof}

\section{Inductive Methods}
\label{Sec:Induct}

In this section, we show how to deduce new cases of the tropical maximal rank conjecture, of increasing genus, from any given case.  This allows us to induct on the parameters and thereby prove the conjecture in a wide range of cases.

We first prove Theorem~\ref{thm:injectiveinduction}, which says that, when the tropical maximal rank conjecture holds for parameters $(r, s, \rho, m)$ in the injective range, then it also holds for $(r, s, \rho + 1, m)$ and $(r, s+1, \rho, m)$.

\begin{proof}[Proof of Theorem~\ref{thm:injectiveinduction}]
By assumption, there exists a divisor $D$ on the chain of $g$ loops of rank $r$ and degree $d$, whose class is vertex avoiding, such that the set of all functions $\psi_I$ is tropically independent.  We first show that we can increase $\rho$ by 1.  It suffices to construct a divisor $D'$ on the chain of $g+1$ loops of rank $r$ and degree $d+1$, whose class is vertex avoiding, such that the set of all functions $\psi_I$ is tropically independent.

We construct $D'$ by specifying that $D'\vert_{\Gamma_{[0,g]}} = D$, and the last step of the corresponding lattice path is lingering, with the point of $D'$ on $\gamma_{g+1}^{\circ}$ in sufficiently general position so that the class of $D'$ is vertex avoiding.  Then, the restrictions of the functions $\psi_I$ to $\Gamma_{[0,g]}$ are tropically independent, so the functions themselves are tropically independent as well.

We now show that we can increase $s$ by 1.  It suffices to construct a divisor $D'$ on a chain of $g+r+1$ loops of rank $r$ and degree $d+r$, whose class is vertex avoiding, such that the set of all functions $\psi_I$ is tropically independent.  As before, we construct $D'$ such that $D'\vert_{\Gamma_{[0,g]}} = D$, and now the last $r+1$ steps of the lingering lattice path are, in order, in each of the coordinate directions.  (This is equivalent to appending an extra row containing the numbers $g+1, \ldots , g+r+1$ to the bottom of the tableau corresponding to $D$, as shown in Figure~\ref{Fig:InductInj}.)
\begin{figure}[!h]
\begin{tikzpicture}
\matrix[column sep=0.7cm, row sep = 0.7cm] {
\begin{scope}[node distance=0 cm,outer sep = 0pt]
	      \node[bsq] (11) at (2.5,1) {1};
	      \node[bsq] (21) [below = of 11] {2};
	      \node[bsq] (12) [right = of 11] {3};
	      \node[bsq] (22) [below = of 12] {4};
	      \node[bsq] (13) [right = of 12] {5};
	      \node[bsq] (23) [below = of 13] {6};
	      \node[bsq] (14) [right = of 13] {7};
	      \node[bsq] (24) [below = of 14] {8};

          \draw [->] (4,-1)--(4,-1.5);
          %\draw (3.5,-1.25) node {\ref{thm:injectiveinduction}};

	      \node[bsq] (11b) at (2.5,-2.5) {1};
	      \node[bsq] (21b) [below = of 11b] {2};
	      \node[bsq] (31b) [below = of 21b] {9};
	      \node[bsq] (12b) [right = of 11b] {3};
	      \node[bsq] (22b) [below = of 12b] {4};
	      \node[bsq] (32b) [below = of 22b] {10};
	      \node[bsq] (13b) [right = of 12b] {5};
	      \node[bsq] (23b) [below = of 13b] {6};
	      \node[bsq] (33b) [below = of 23b] {11};
	      \node[bsq] (14b) [right = of 13b] {7};
	      \node[bsq] (24b) [below = of 14b] {8};
	      \node[bsq] (34b) [below = of 24b] {12};

\end{scope}
\\};
\end{tikzpicture}
\caption{The change in tableau when inducting on $s$ in Theorem~\ref{thm:injectiveinduction}.}
\label{Fig:InductInj}
\end{figure}
Again, the restrictions of the functions $\psi_I$ to $\Gamma_{[0,g]}$ are tropically independent, so the functions themselves are tropically independent as well.
\end{proof}

We now prove Theorem~\ref{thm:surjectiveinduction}, which says that, when $r \geq s$ and the tropical maximal rank conjecture holds for parameters $(r,s,\rho,m)$ in the surjective range, then it also holds for $(r+1, s, \rho, m)$.

\begin{proof}[Proof of Theorem~\ref{thm:surjectiveinduction}]
Let $g' = g+s$ and $d' = g+r+1$.  By assumption, there exists a divisor $D$ on the chain of $g$ loops of rank $r$ and degree $d$ and a tropically independent set $\cA$ of functions $\psi_I$ of size
\[
\vert \cA \vert = md-g+1 = (md'-g'+1)-((m-1)(s+1)+1) .
\]
We construct a divisor $D'$ on the chain of $g'$ loops of rank $r+1$ and degree $d'$ such that $D'\vert_{\Gamma_{[s+1,g']}} = D$, and the first $s$ steps in the lingering lattice path are all in the first coordinate direction.  (This is equivalent to adding $s$ to every entry of the tableau corresponding to $D$, and then appending an extra column containing the numbers $1, \ldots , s$ to the left of this tableau, as shown in Figure~\ref{Fig:InductSurj}.)
\begin{figure}[!h]
\begin{tikzpicture}
\matrix[column sep=0.7cm, row sep = 0.7cm] {
\begin{scope}[node distance=0 cm,outer sep = 0pt]
	      \node[bsq] (11) at (2.5,1) {1};
	      \node[bsq] (21) [below = of 11] {3};
	      \node[bsq] (12) [right = of 11] {2};
	      \node[bsq] (22) [below = of 12] {4};
	      \node[bsq] (13) [right = of 12] {5};
	      \node[bsq] (23) [below = of 13] {6};
	      \node[bsq] (14) [right = of 13] {7};
	      \node[bsq] (24) [below = of 14] {8};

          \draw [->] (6.5,0.5)--(7,0.5);
          %\draw (6.75,0.75) node {\ref{thm:surjectiveinduction}};

	      \node[bsq] (11a) at (8,1) {1};
	      \node[bsq] (21a) [below = of 11a] {2};
	      \node[bsq] (12a) [right = of 11a] {3};
	      \node[bsq] (22a) [below = of 12a] {5};
	      \node[bsq] (13a) [right = of 12a] {4};
	      \node[bsq] (23a) [below = of 13a] {6};
	      \node[bsq] (14a) [right = of 13a] {7};
	      \node[bsq] (24a) [below = of 14a] {8};
	      \node[bsq] (15a) [right = of 14a] {9};
	      \node[bsq] (25a) [below = of 15a] {10};

\end{scope}
\\};
\end{tikzpicture}
\caption{The change in tableau when inducting on $r$ Theorem \ref{thm:surjectiveinduction}.}
\label{Fig:InductSurj}
\end{figure}

We construct a set $\cA'$ of $md'-g'+1$ functions on the chain of $g'$ loops as follows.  First, replace each function $\psi_I \in \cA$ with $\psi_{I+1}$, where $I+1 = \{ i+1 \vert i \in I \}$.  Note that this is well defined, as we have increased the rank by 1.  Now, append to the set $\cA$ the function $\psi_{0^{(m)}}$ and all functions of the form $\psi_{0^{(k)}1^{(m-1-k)}\alpha}$, where $1 \leq k \leq m-1$ and $1 \leq \alpha \leq s+1$.  Note that, since $r \geq s$, all of these functions exist.  Moreover, this is precisely $(m-1)(s+1)+1$ functions, so the set $\cA'$ obtained by adding these functions has cardinality $md'-g'+1$.

Now, suppose that the minimum of the functions $\psi_I \in \cA'$ occurs everywhere at least twice.  On the bridge $\beta_s$, all $(m-1)(s+1)+1$ of the added functions have distinct slopes, and all have slope larger than $\sigma_s \psi_{1^{(m)}} = mr$, which is the largest possible slope among all functions $\psi_{I+1}$ for $\psi_I \in \cA$.  It follows that the only functions that may obtain the minimum to the right of $\beta_s$ are contained in the set $\cA$.  By assumption, however, the functions in $\cA$ are tropically independent on this subgraph, and the result follows.
\end{proof}

\begin{example}
Figures \ref{Fig:InductInj} and \ref{Fig:InductSurj} illustrate the change in tableaux for the inductive steps in Theorems~\ref{thm:injectiveinduction} and \ref{thm:surjectiveinduction}, starting from the case $(r,s,\rho,m) = (3,2,0,3)$ to deduce the cases $(3,3,0,3)$ and $(4,2,0,3)$, respectively.  This is a rare case where ${r+m \choose m} = md-g+1$, so it is in both the injective and surjective ranges (i.e. the maximal rank conjecture predicts $\mu_m$ to be an isomorphism), and hence both theorems can be applied.  Note, however, that some cases of the tropical maximal rank conjecture, such as $(r,s,\rho,m) = (4,3,0,3)$, cannot be deduced from any cases of smaller genus using Theorems~\ref{thm:injectiveinduction} and \ref{thm:surjectiveinduction}, so additional arguments are required to handle these base cases.  The case $(r,s,\rho,m) = (4,3,0,3)$ is proved in Theorem~\ref{thm:mainthm}.
\end{example}

%In only a few special cases, we can give inductive arguments proving new cases of the tropical maximal rank conjecture by induction on $m$.  See the proof of Theorem~\ref{thm:canonical}.  There is, however, a simple geometric argument showing that certain cases of the maximal rank conjecture imply the conjecture for larger values of $m$.

%\begin{proposition}
%\label{prop:allm}
%Let $X$ be a general curve of genus $g$.  Suppose $r \geq 3$, $\rho \geq 0$, and $d < g + r$.  Let $D_X$ be a general divisor on $X$ of degree $d$ and rank $r$, and suppose for some $m \geq 3$ the multiplication map
%\[
%\mu_m: \Sym^m \cL(D_X) \rightarrow \cL(mD_X)
%\]
%is surjective.  Then $\mu_{m'}$ is surjective for all $m' \geq m$.
%\end{proposition}

%\begin{proof}
%A proof of this is contained in the proof of \cite[Theorem~1.2]{MRC}.
%\end{proof}

%\noindent  In particular, within the surjective range, proving the tropical maximal rank conjecture for $(r, s, \rho, m)$ implies the classical maximal rank conjecture for algebraic curves for $(r,s,\rho, m')$ for all $m' \geq m$.

\section{The Canonical Divisor}
\label{Sec:Canonical}

Max Noether's theorem states that, if $X$ is a nonhyperelliptic curve and $D=K_X$ is the canonical divisor, then the maps $\mu_m$ are surjective for all $m$.  This can be seen as a strong form of the maximal rank conjecture in the special case of the canonical divisor.  The results of \cite[\S3]{MRC} provide a new proof of this result for sufficiently general curves, by proving the tropical maximal rank conjecture for $m = 2$ and using an algebraic geometry argument to deduce the classical maximal rank conjecture for $m > 2$.  In this section, we give a purely combinatorial proof of Max Noether's theorem for a general curve, by showing that the tropical maximal rank conjecture holds for all $m$, in the case of the canonical divisor.

\begin{proof}[Proof of Theorem~\ref{thm:canonical}]
Figure \ref{Fig:Canonical} illustrates the tableau corresponding to the canonical divisor.  By Theorem~\ref{thm:surjectiveinduction}, if the tropical maximal rank conjecture holds for $r=g-1$, $s=1$, and $\rho=0$ then it also holds for $r'=g$, $s=1$, and $\rho=0$.  The smallest genus of a nonhyperelliptic curve is $3$, so to prove the result by induction on $g$, it suffices to prove the base case $g=3$.

\begin{figure}[!h]
\begin{tikzpicture}
\matrix[column sep=0.7cm, row sep = 0.7cm] {
\begin{scope}[node distance=0 cm,outer sep = 0pt]
	      \node[bsq] (1) at (2.5,1) {1};
	      \node[bsq] (2) [right = of 1] {2};
	      \node[bsq] (3) [right = of 2] {3};
          \draw (5.5,1) node {$\cdots$};
   	      \node[bsq] (g) at (6.5,1) {$g$};

\end{scope}
\\};
\end{tikzpicture}
\caption{The tableau corresponding to the canonical divisor.}
\label{Fig:Canonical}
\end{figure}

Suppose $g=3$.  We must construct a set $\cA_m$ of $4m-2$ functions that are tropically independent on the chain of 3 loops.  We let $\cA_2$ be the set of all functions $\psi_{ij}$, and define $\cA_m$ recursively as follows.  For each function $\psi_I \in \cA_{m-1}$, we let $I'$ be the multiset obtained by adding an additional 1 to the multiset $I$, and include the function $\psi_{I'}$ in $\cA_m$.  We then add to this set the 4 functions
\[
\psi_{0^{(m)}}, \psi_{0^{(m-1)}2}, \psi_{02^{(m-1)}}, \psi_{2^{(m)}} .
\]

We first show that the functions $\psi_I \in \cA_m$ are tropically independent.  Suppose that the minimum $\theta = \min_{\psi_I \in \cA_m} \{ \psi_I \}$ occurs everywhere at least twice.  Then at the point $u_1$, the minimum must be obtained by two functions with the same slope.  Note that
\[
\sigma_1 \psi_{0^{(a)}1^{(b)}2^{(m-a-b)}} = 3a+b .
\]
We show, by induction on $m$, that the largest slope $\sigma_1 \psi_I$ that is obtained twice among the functions $\psi_I \in \cA_m$ is $m$.  To see this, first note that there is no multiset $I$ of size $m-1$ with $\sigma_1 \psi_I = 3m-4$.  It follows that $\psi_{0^{(m-1)}2}$ is the only function in $\cA$ with $\sigma_1 \psi_I = 3m-3$.  From this we see that the largest slope $\sigma_1 \psi_I$ that is obtained twice among the functions $\psi_I \in \cA_m$ is either $\sigma_1 \psi_{02^{(m-1)}} = 3$ or is obtained by two functions of the form $\psi_{I'}$ for $\psi_I \in \cA_{m-1}$.  The claim then follows by induction on $m$.  Thus, $\sigma_1 \leq m$.

Similarly, we have
\[
\sigma_2 \psi_{0^{(a)}1^{(b)}2^{(m-a-b)}} = 3a+2b .
\]
It follows that there is no multiset $I$ of size $m-1$ with $\sigma_2 \psi_I = 1$.  Therefore, $\psi_{02^{(m-1)}}$ is the only function in $\cA$ with $\sigma_2 \psi_I = 3$.  From this we see that the smallest slope $\sigma_2 \psi_I$ that is obtained twice among functions $\psi_I \in \cA_m$ is either $\sigma_2 \psi_{0^{(m-1)}2} = 3(m-1)$ or is obtained by two functions of the form $\psi_{I'}$ for $\psi_I \in \cA_{m-1}$.  It follows by induction on $m$ that the smallest slope $\sigma_2 \psi_I$ that is obtained twice among functions $\psi_I \in \cA_m$ is $2m$.  Thus, $\sigma_2 \geq 2m$.

We therefore see that $\sigma_2 \geq \sigma_1 + m$, and hence $\delta_2 \leq 0$.  But this contradicts Corollary~\ref{Cor:TwoChips}, which says that $\delta_t \geq 2$ for all $t$.
\end{proof}

\section{Proof of Theorem \ref{thm:mainthm}}
\label{Sec:Results}

In this section, we prove Theorem \ref{thm:mainthm}.  Throughout, our tableau will be the one in which the numbers $1,2, \ldots , s$ appear in the first column, $s+1 , s+2, \ldots , 2s$ appear in the second column, and so on.  We let $D$ be the corresponding divisor on the generic chain of loops.  Our goal is to find a set $\cA$ of functions $\psi_{ijk}$ of size $\vert \cA \vert = \min \{ {{r+3}\choose{3}}, 3d - g + 1 \}$ that are tropically independent.

After we choose the set $\cA$, we will suppose that the minimum
\[
\min_{\psi_{ijk} \in \cA} \{ \psi_{ijk} \}
\]
occurs everywhere at least twice, and let $\theta$ denote this minimum.  We let $\Delta = \ddiv (\theta) + 3D$ be the divisor corresponding to the function $\theta$.  By Corollary \ref{Cor:TwoChips}, we see that
\[
\delta_t := \deg (\Delta \vert_{\gamma_t} ) \geq 2 \mbox{ } \forall t .
\]
Equivalently, we have
\[
\sigma_t \leq \sigma_{t-1} + 1 \mbox{ } \forall t \leq rs .
\]
Moreover, equality can hold only if, for all functions $\psi_{ijk}$ obtaining the minimum on $\gamma_t$, at least one of $i,j,k$ is equal to $t$.

To show that the functions $\psi_{ijk} \in \cA$ are tropically independent, we will proceed from left to right across the graph, bounding the slope of $\theta$ on the long bridges $\beta_{\alpha s}$ between the blocks.  Before choosing the set $\cA$, we treat the first two blocks and the last two blocks separately, in the following two lemmas.

\begin{lemma}
\label{Lemma:FirstColumn}
If $r \leq s+1$, we have $\sigma_s \leq 3r+s-4$.  Similarly, $\sigma_{rs} \geq 2s+4$.
\end{lemma}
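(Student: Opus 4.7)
The plan is to handle both inequalities by the same trick: at a smooth interior point $v$ of $\theta$ on the long bridge just to the right of $u_s$ (respectively $u_{rs}$), the hypothesis that the minimum is attained at least twice forces the existence of at least two distinct multisets $I,J$ with $\sigma_s\psi_I=\sigma_s\psi_J=\sigma_s$ (respectively $\sigma_{rs}\psi_I=\sigma_{rs}\psi_J=\sigma_{rs}$). Indeed, each $\psi_I$ has constant slope on the bridge (no chips of $3D$ in its interior), so $\theta=\min_{I\in\cA}(\psi_I+b_I)$ is concave piecewise linear there with only finitely many breakpoints; on the nondegenerate interval of constant slope immediately to the right of the midpoint, any function attaining the minimum must have slope equal to that of $\theta$. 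The task thus reduces to showing that, under the hypothesis $r\leq s+1$, each slope value in the relevant range is attained by at most one multiset.

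For the first bound, along $\beta_s$ we have $p_s(0)=r+s$, $p_s(i)=r-i$ for $1\leq i\leq r-1$, and $p_s(r)=0$. A direct enumeration shows that the multisets $I=\{a,b,c\}$ with $\sigma_s\psi_I\geq 3r+s-3$ are exactly $(0,0,0)$ with value $3r+3s$, $(0,0,c)$ for $c=1,\ldots,r$ with value $3r+2s-c$, $(0,1,1)$ with value $3r+s-2$, and (when $r\geq 2$) $(0,1,2)$ with value $3r+s-3$. A coincidence between a value $3r+2s-c$ and $3r+s-2$ or $3r+s-3$ would force $c=s+2$ or $c=s+3$ respectively, both of which are excluded by $c\leq r\leq s+1$. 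Hence every forbidden slope value is attained by a unique multiset, contradicting the existence of two active functions with slope $\sigma_s$, and forcing $\sigma_s\leq 3r+s-4$.

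For the second bound, along $\beta_{rs}$ we have $p_{rs}(i)=r+s-i$ for $0\leq i\leq r-1$ and $p_{rs}(r)=0$. A symmetric enumeration shows that the multisets with $\sigma_{rs}\psi_I\leq 2s+3$ are exactly $(r,r,r)$ with value $0$, $(r-k,r,r)$ for $k=1,\ldots,r$ with value $s+k$, $(r-1,r-1,r)$ with value $2s+2$, and (when $r\geq 2$) $(r-2,r-1,r)$ with value $2s+3$. Possible collisions would require $r=s+2$ or $r=s+3$, and in particular $s+r\leq 2s+1<2s+2$ since $r\leq s+1$. Hence each attainable slope value below $2s+4$ is attained by a unique multiset, and the same two-functions argument yields $\sigma_{rs}\geq 2s+4$.

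The main obstacle is keeping the case analysis complete at the tight boundary $r=s+1$, where the ``many-zero'' multisets $(0,0,c)$ come closest to colliding with $(0,1,1)$ and $(0,1,2)$, and symmetrically the ``many-$r$'' multisets $(r-k,r,r)$ come closest to colliding with $(r-1,r-1,r)$ and $(r-2,r-1,r)$; the hypothesis $r\leq s+1$ is exactly the sharp inequality that rules out every such collision. The edge case $r=1$ must be verified separately since $(0,1,2)$ and $(r-2,r-1,r)$ do not exist there, but the resulting slope lists are short enough to inspect directly.
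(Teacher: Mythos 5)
Your proof is correct and follows essentially the same route as the paper's: both arguments reduce to showing that no slope value above the claimed bound is attained by two distinct functions on the long bridge $\beta_s$ (resp.\ $\beta_{rs}$), using the hypothesis $r \leq s+1$ exactly to separate the slopes of the functions $\psi_{00\alpha}$ (resp.\ $\psi_{\alpha r r}$) from all the others, and then observing that $3r+s-4 = (r+s)+(r-1)+(r-3) = (r+s)+(r-2)+(r-2)$ is the first repeated value. Your version is just more explicit about the enumeration of near-extremal multisets and about why two minimizers on a linearity interval of $\theta$ must share the slope $\sigma_s$; the worry about $r=1$ is moot since the conjecture assumes $r \geq 3$.
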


\begin{proof}

Since $r \leq s+1$, every function of the form $\psi_{00\alpha}$ has larger slope on $\beta_s$ than any function not of this form.  It follows that the largest slope that is obtained by two or more of the functions $\psi_{ijk}$ on $\beta_s$ is
\[
3r+s-4 = (r+s) + (r-1) + (r-3) = (r+s) + (r-2) + (r-2) .
\]
It follows that $\sigma_s \leq 3r+s-4$.  A symmetric argument shows that $\sigma_{rs} \geq 2s+4$.
\end{proof}

\begin{lemma}
\label{Lemma:SecondColumn}
If $r \leq s+1$, we have $\sigma_{2s} \leq 3r+s-5$.  Similarly, $\sigma_{(r-1)s} \geq 2s+5$.
\end{lemma}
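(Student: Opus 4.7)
The plan is to adapt the slope-enumeration argument of Lemma~\ref{Lemma:FirstColumn} to the bridge $\beta_{2s}$, using that lemma and Proposition~\ref{Prop:TwoChipsPerLoop} as additional inputs. First I would compute $p_{2s} = (r+s,\,r+s-1,\,r-2,\,r-3,\,\ldots,\,1,\,0)$ and enumerate the multisets $I$ of size $3$ whose slope $\sigma_{2s}\psi_I$ exceeds $3r+s-5$. The hypothesis $r \leq s+1$ creates a gap of at least $s+1$ between the top two entries of $p_{2s}$ and the rest, so these multisets fall into a short list: the five singletons $\{0,0,0\},\{0,0,1\},\{0,1,1\},\{1,1,1\},\{0,2,2\}$ (each with a unique slope at $\beta_{2s}$), together with families of the form $\{0,0,k\},\{0,1,k\},\{1,1,k\}$ for $k\geq 2$ giving slopes $3r+2s-k$, $3r+2s-1-k$, $3r+2s-2-k$ respectively. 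Since $\sigma_{2s}$ must be attained by at least two distinct multisets for the minimum at $u_{2s}$ to occur twice, the singleton slopes are eliminated at once.

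For each remaining candidate slope $\sigma \in \{3r+2s-3,\ldots,3r+s-4\}$, I would use the slope identity
\[
\sigma_{2s}\psi_I - \sigma_s\psi_I = s \cdot \mu_1(I),
\]
where $\mu_1(I)$ denotes the multiplicity of $1$ in $I$; this identity holds because block $2$ corresponds to column $1$ of the tableau. Combining it with the bound $\sigma_s \leq 3r+s-4$ from Lemma~\ref{Lemma:FirstColumn} and the slope-increase bound $\sigma_{2s} - \sigma_s \leq s$ from Corollary~\ref{Cor:TwoChips} would eliminate most candidate slopes by exhibiting, in each pair of multisets attaining $\sigma$ on $\beta_{2s}$, at least one that fails block-$\sigma$-permissibility on $\Gamma_{[s+1,2s]}$. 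The delicate case is the slope $\sigma = 3r+2s-4$, realized simultaneously by $\psi_{013}$ and $\psi_{112}$, for which both multisets satisfy these coarse constraints; this forces the tight configuration $\sigma_s = 3r+s-4$ together with $\delta_t = 2$ at every loop $\gamma_t^\circ$ in block $2$.

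I would rule out this tight configuration using Proposition~\ref{Prop:TwoChipsPerLoop}: since block $2$ sits in column $1$ and every $\delta_t = 2$, every minimum-attaining function on each $\gamma_t^\circ$ must contain the index $1$ in its multiset. A loop-by-loop $\sigma$-permissibility analysis under the assumption $\sigma_t = 3r+t-4$ shows that $\psi_{013}$ is generically the only permissible $1$-containing function, with isolated exceptions at $t \approx 3s/2$ (admitting $\psi_{111}$) and $t = 2s$ (admitting $\psi_{112}$); this supplies only one permissible $1$-containing candidate at most interior loops, preventing the minimum from being attained twice and yielding the desired contradiction. The resulting bound $\sigma_{2s} \leq 3r+s-5$ is tight, realized by the pair $\psi_{023},\psi_{122}$. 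The symmetric inequality $\sigma_{(r-1)s} \geq 2s+5$ then follows by the mirror-reversed argument, starting from the symmetric bound $\sigma_{rs} \geq 2s+4$ of Lemma~\ref{Lemma:FirstColumn}. The main obstacle is the case analysis eliminating the intermediate candidate slopes, especially the delicate tight case at $3r+2s-4$ where the block-permissibility and loop-permissibility arguments must be combined carefully.
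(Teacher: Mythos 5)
Your enumeration of the multisets with $\sigma_{2s}\psi_I > 3r+s-5$ is correct, the slope identity $\sigma_{2s}\psi_I - \sigma_s\psi_I = s\cdot\mu_1(I)$ is right, and your treatment of the top shared slope $3r+2s-4$ (forcing $\delta_t = 2$ on every loop of the second block, hence every minimum-attaining function contains the index $1$, and then counting $\sigma$-permissible $1$-containing functions loop by loop) is sound. The gap is your claim that the coarse constraints eliminate every \emph{other} candidate slope. They do not. For each $k$ with $4 \leq k \leq r$, the pair $\psi_{01k}$ and $\psi_{11(k-1)}$ share the slope $3r+2s-1-k$ at $\beta_{2s}$; since $r \leq s+1$ this slope is at least $2r+2s-1 > 3r+s-4$, so it lies in your candidate range, and \emph{both} functions pass every test you impose: $\sigma_s\psi_{01k} = 3r+s-1-k$ and $\sigma_s\psi_{11(k-1)} = 3r-1-k$ are both $\leq 3r+s-5$, and the requirement $\sigma_s \geq \sigma_{2s}-s = 3r+s-1-k$ is compatible with $\sigma_s \leq 3r+s-4$. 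So for $r \geq 4$ these configurations survive your sieve, and your tight-configuration argument does not transfer, because with $\sigma_{2s} < \sigma_s + s$ the slope need not increase on every loop of the block and you can no longer conclude $\delta_t = 2$ (hence cannot force every minimum-attaining function to contain a $1$) on each loop.

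The paper sidesteps all of the intermediate slopes with a dichotomy at the \emph{first} long bridge $\beta_s$ rather than working only at $\beta_{2s}$. Either every function attaining the minimum at $u_s$ contains the index $1$ — in which case two of them share a slope there, and the largest slope at $\beta_s$ shared by two $1$-containing functions is $3r-5 = \sigma_s\psi_{113} = \sigma_s\psi_{122}$ (the functions $\psi_{01k}$ have pairwise distinct slopes exceeding all slopes $\sigma_s\psi_{1jk}$ with $j,k\geq 1$, precisely because $r \leq s+1$), giving $\sigma_s \leq 3r-5$ and hence $\sigma_{2s} \leq 3r+s-5$ at once — or else some function with no index equal to $1$ attains the minimum at $u_s$; its slope is unchanged across the second block, so block-permissibility gives $\sigma_{2s} \leq \sigma_s \leq 3r+s-4$, and strictness follows from the uniqueness of $\psi_{022}$ at slope $3r+s-4$, exactly as in your singleton argument. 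If you want to salvage your route, you would have to run a separate loop-by-loop analysis for each slope $3r+2s-1-k$, $4 \leq k \leq r$, distinguishing the sub-cases according to where in the block the slope fails to increase; the dichotomy at $\beta_s$ is substantially shorter.
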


\begin{proof}

We first show that $\sigma_{2s} \leq 3r+s-4$.  We do this in two cases.  First, suppose that, for each function $\psi_{ijk}$ obtaining the minimum along $\beta_s$, at least one of $i,j,k$ is equal to 1.  Since the minimum occurs at least twice, there must be two such functions that have the same slope at $u_s$.  We see that the largest slope that can be obtained more than once by functions involving 1 is
\[
3r-5 = \sigma_s \psi_{113} = \sigma_s \psi_{122} .
\]
Since $\sigma_t \leq \sigma_{t-1} + 1$ for all $t$, it follows that $\sigma_{2s} \leq 3r+s-5$.

Now, suppose conversely that the minimum at some point of $\beta_s$ is obtained by a function $\psi_{ijk}$ where none of $i,j,k$ is equal to 1.  Then, since $\sigma_s \psi_{ijk} = \sigma_{2s} \psi_{ijk}$, by Lemma \ref{Lem:LongBridges}, we have $\sigma_{2s} \leq \sigma_s$.  By Lemma \ref{Lemma:FirstColumn}, it follows that $\sigma_{2s} \leq 3r+s-4$.

To see that this inequality is strict, note that there is only one function $\psi_{ijk}$ with $\sigma_{2s} \psi_{ijk} = 3r+s-4$, namely $\psi_{022}$.  It follows that $\sigma_{2s} < 3r+s-4$.  That $\sigma_{(r-1)s} \geq 2s+5$ follows by a symmetric argument.
\end{proof}

We now prove Theorem \ref{thm:mainthm} in the injective case.

\begin{proof}[Proof of part (1) of Theorem~\ref{thm:mainthm}]
Our goal is to show that, if $s \geq \frac{r^2}{4}$, then the full set of functions $\psi_{ijk}$ is tropically independent.  We show that $\sigma_{\alpha s} \leq 3r+s-2\alpha$ for all $\alpha \leq r$.  We prove this by induction on $\alpha$, the cases $\alpha = 1,2$ being Lemmas \ref{Lemma:FirstColumn} and \ref{Lemma:SecondColumn}.  Suppose that $\sigma_{\alpha s} \leq 3r+s-2\alpha$, and for contradiction assume that $\sigma_{(\alpha +1)s} > 3r+s-2\alpha-2$.  Then any function $\psi_{ijk}$ obtaining the minimum on $\Gamma_{[\alpha s+1,(\alpha+1)s]}$ must satisfy
\[
\sigma_{\alpha s} \psi_{ijk} \leq 3r+s-2\alpha \mbox{ and } \sigma_{(\alpha+1)s} \psi_{ijk} > 3r+s-2\alpha-2 .
\]
Note that, if none of $i,j,k$ are equal to $\alpha$, then either
\[
\sigma_{\alpha s} \psi_{ijk} \geq 2(r-\alpha+s+1) > 3r+s-2\alpha,
\]
(if at least two of $i,j,k$ are smaller than $\alpha$), or
\[
\sigma_{(\alpha+1)s} \psi_{ijk} \leq r+s+2(r-\alpha-1) = 3r+s-2\alpha-2
\]
(if at least two of $i,j,k$ are greater than $\alpha$).

It follows that at least one of $i,j,k$ must be equal to $\alpha$.  Moreover, since $3r+s-2\alpha < 3r+2s-3\alpha+2 = \sigma_{\alpha s} \psi_{(\alpha-1)^2\alpha}$, at most one of $i,j,k$ can be smaller than $\alpha$.  Similarly, since $3r+s-2\alpha-2 > 3r+s-3\alpha-2 = \sigma_{(\alpha+1)s} \psi_{\alpha(\alpha+1)^2}$, at most one of $i,j,k$ can be larger than $\alpha$.  In other words, for each function $\psi_{ijk}$ that obtains the minimum at some point of $\Gamma_{[\alpha s+1,(\alpha+1)s]}$, we may assume after reordering that $i \leq \alpha$, $j = \alpha$, and $k \geq \alpha$.  Note that the number of such triples is $(\alpha +1)(r+1-\alpha)$, which is maximized when $\alpha = \frac{r}{2}$, in which case it is equal to $\frac{(r+2)^2}{4}$.

We now show that the restrictions of these functions to $\Gamma_{[\alpha s+1,(\alpha+1)s]}$ are tropically independent.  Note that all of these functions contain $\psi_j = \psi_{\alpha}$ as a summand, so it suffices to show that the functions $\psi_{ik} = \psi_{ijk} - \psi_j$ are tropically independent.  We write
\[
\theta_{\alpha} = \min \{ \psi_{ik} \vert_{\Gamma_{[\alpha s+1,(\alpha+1)s]}} \} .
\]

For $\sigma_{\alpha s} \psi_{\alpha r} = r-\alpha+s \leq \sigma \leq (r+s)+(r-\alpha) = \sigma{(\alpha+1)s} \psi_{0\alpha}$, let $\Gamma_{\sigma}$ denote the union of the loops $\gamma_t \subset \Gamma_{[\alpha s+1,(\alpha+1)s]}$ for which $\sigma_{t-1} \theta_{\alpha} = \sigma$.  If no such $t$ exists, let $\Gamma_{\sigma}$ be a segment of the bridge between $\Gamma_{\sigma+1}$ and $\Gamma_{\sigma-1}$.  Let $\cA_{\sigma}$ be the set of functions $\psi_{ik}$ that are permissible on $\Gamma_{\sigma}$.  If $\Gamma_{\sigma}$ has positive genus $g(\Gamma_{\sigma})$, then by a minor variant of \cite[Proposition 7.6]{MRC}, we see that
\[
\vert \cA_{\sigma} \vert > g(\Gamma_{\sigma})+1.
\]
By applying Corollary \ref{Cor:TwoChips} in the case $m=2$, we see that the slopes $\sigma_t \theta_{\alpha}$ do not increase.  Since the slopes $\sigma_t \psi_{ik}$ do not decrease, we see that $\cA_{\sigma} \cap \cA_{\sigma'} = \emptyset$ for $\sigma \neq \sigma'$.  Moreover, by considering the functions $\psi_{ik}$ where neither $i$ nor $k$ is equal to $\alpha$, we see that $\cA_{\sigma} \neq \emptyset$ for all $\sigma$.  It follows that
\[
\vert \bigcup_{\sigma = r-\alpha+s}^{(r+s)+(r-\alpha)} \cA_{\sigma} \vert \geq \sum_{\sigma = r-\alpha+s}^{(r+s)+(r-\alpha)} \vert \cA_{\sigma} \vert > \sum_{\sigma = r-\alpha+s}^{(r+s)+(r-\alpha)} (g(\Gamma_{\sigma})+1) = s+r+1.
\]
Note that this inequality is strict.  By assumption, however, we have
\[
s+r+1 \geq \frac{r^2}{4} + (r+1) = \frac{(r+2)^2}{4},
\]
a contradiction.

We therefore see that $\sigma_{rs} \leq r+s$.  But by Proposition \ref{Prop:TwoChipsPerLoop} we have
\[
\sigma_t \leq \sigma_{t-1} - 2 \mbox{ for } rs < t \leq (r+1)s ,
\]
so $\sigma_{(r+1)s} \leq r-s < 0$, a contradiction.
\end{proof}

And now we prove Theorem \ref{thm:mainthm} in the surjective case.

\begin{proof}[Proof of part (2) of Theorem~\ref{thm:mainthm}]
We first consider the case $s=r-1$, $\rho=0$, $m=3$.  Note that
\[ 3d-g+1 = 2g+3r-3s+1
\]
\[
= 2(r^2 + 1) = {{r+3}\choose{3}} - {{r-1}\choose{3}} .
\]
We let $\cA$ be the set of functions $\psi_{ijk}$ such that at least one of $i,j,k$ is equal to 0, 1, $r-1$, or $r$.   The above computation shows that $\vert \cA \vert = 3d-g+1$, as desired.  It suffices to show that the functions $\psi_{ijk} \in \cA$ are tropically independent.

We will show that, for all $\alpha$ in the range $1 \leq \alpha < r-1$, we have $\sigma_{(\alpha+1)s} \leq 4r - 4 - 2\alpha$.  As a consequence, we see that $\sigma_{(r-1)s} \leq 2r$, contradicting the second part of Lemma \ref{Lemma:SecondColumn}.  We prove this by induction on $\alpha$, the case $\alpha=1$ being the first part of Lemma \ref{Lemma:SecondColumn}.

We first show that $\sigma_{(\alpha+1)s} \leq 4r - 2 - 2\alpha$.  By induction, we know that $\sigma_{\alpha s} \leq 4r-2-2\alpha$.  Recall that the slope of $\theta$ may increase by at most 1 from one bridge to the next.  If $\sigma_{(\alpha+1)s} > 4r-2-2\alpha$, then there must be a $t$ such that
\[
\sigma_{\alpha s+t} = \sigma_{\alpha s+t-1} + 1 = 4r-1-2\alpha .
\]
By Lemma \ref{Lem:LongBridges}, all of the functions $\psi_{ijk}$ obtaining the minimum on $\gamma_t^{\circ}$ must have at least one of $i,j,k$ equal to $\alpha$, and since no 2 functions have identical restrictions to $\gamma_t^{\circ}$, by \cite[Lemma~5.1]{MRC} there must be at least 3 such functions.  By construction of the set $\cA$, however, no such set of 3 functions exists.  It follows that $\sigma_{(\alpha+1)s} \leq 4r - 2 - 2\alpha$.

Now, we note that there is only one function $\psi_{ijk}$ with $\sigma_{(\alpha+1)s} \psi_{ijk} = 4r-3-2\alpha$, and only one with $\sigma_{(\alpha+1)s} \psi_{ijk} = 4r-2-2\alpha$.  To see this, note that if $j$ and $k$ are greater than $\alpha$, then
\[
\sigma_{(\alpha+1)s} \psi_{ijk} \leq (2r-1) + 2(r-\alpha-1) = 4r-\alpha-3 ,
\]
with equality if and only if $i=0$, $j=k=\alpha+1$.  Similarly, if $i$ and $j$ are less than or equal to $\alpha$, then
\[
\sigma_{(\alpha+1)s} \psi_{ijk} \geq 2(2r-\alpha-1) = 4r-2\alpha-2 ,
\]
with equality if and only if $i=j=\alpha$, $k=r$.  It follows that $\sigma_{(\alpha+1)s} \leq 4r - 4 - 2\alpha$.

Finally, by Theorem~\ref{thm:surjectiveinduction}, we see that the tropical maximal rank conjecture therefore holds for $r \geq s+1$, $\rho=0$, and $m=3$ by induction on $r$.
\end{proof}

%Using an algebraic geometry argument (Proposition~\ref{prop:allm}), we conclude that the classical maximal rank conjecture holds for all $m$, when $\rho =0$ and $d>g$.  Note that the case $m=2$ is known already by \cite{MRC}.  The following theorem establishes the conjecture in these cases for the remaining values of $m$.

%\begin{theorem}
%Let $X$ be a smooth projective curve of genus $g$ over a nonarchimedean field such that the minimal skeleton of the Berkovich analytic space $X^{\an}$ is isometric to $\Gamma$.  Suppose $r \geq 3$, $\rho = 0$, and $g < d < g + r$.  Then there is a very ample complete linear series $\cL(D_X)$ of degree $d$ and rank $r$ on $X$ such that the multiplication maps
%\[
%\mu_m: \Sym^m \cL(D_X) \rightarrow \cL(mD_X)
%\]
%are surjective for all $m \geq 3$.
%\end{theorem}

%\begin{proof}
%By Theorem \ref{thm:mainthm}, the map $\mu_3$ is surjective for a general divisor $D_X$ of degree $d$ and rank $r$ on $X$.  By Proposition \ref{prop:allm}, therefore, the map $\mu_m$ is surjective for all $m \geq 3$.
%\end{proof}

\section{Divisors of Small Rank}
\label{Sec:SmallRank}

A consequence of Theorem~\ref{thm:injectiveinduction} is that, for fixed $r$ and $m$, it suffices to prove the tropical maximal rank conjecture for finitely many $s$ and $\rho$.  In this section, we use this observation to prove the tropical maximal rank conjecture for $m = 3$ and $r \leq 4$.  We also prove the case $m=3$, $r=5$, $\rho=0$.  We hope that the examples in this section will illuminate some of the additional complexities that arise when we move beyond the cases explored in the earlier sections, while simultaneously suggesting that the tropical maximal rank conjecture should hold far more generally.

\subsection{Rank 3}  Fix $r=3$ and $m=3$.  By Theorem \ref{thm:mainthm}, the tropical maximal rank conjecture holds for $s=2$ and $\rho=0$.  Since
\[
{{3+3}\choose{3}} = 3 \cdot 9 - 8 + 1,
\]
we can use Theorem~\ref{thm:injectiveinduction} to conclude that the tropical maximal rank conjecture holds for all $s \geq 2$, $\rho \geq 0$.  It therefore suffices to consider the cases where $s=1$.  When $s=1$ and $\rho=0$, the tropical maximal rank conjecture holds by Theorem \ref{thm:canonical}.

Let us consider the case $s=1, \rho=1$ in detail.  In this case, consider the tableau pictured in Figure \ref{Fig:RhoOne}, and let $\cA$ be the set of all functions $\psi_{ijk}$ other than
\[
\psi_{003}, \psi_{023}, \psi_{033}.
\]
We show that the set $\cA$ is tropically independent.  To see this, suppose that the minimum $\min \{ \psi_{ijk} \}$ occurs everywhere at least twice, and let $\theta$ denote this minimum.  The largest slope $\sigma_1 \psi_{ijk}$ that is obtained at least twice among functions $\psi_{ijk} \in \cA$ is
\[
\sigma_1 \psi_{013} = \sigma_1 \psi_{022} = \sigma_1 \psi_{111} = 6.
\]
It follows that $\sigma_1 \leq 6$.  If $\sigma_2 \geq 6$, then the three functions listed above are precisely the $\sigma$-permissible functions on $\gamma_2^{\circ}$.  One can check, however, that the restrictions of these three functions to $\gamma_2^{\circ}$ are tropically independent, and thus $\sigma_2 < 6$.  We then see that, if $\psi_{ijk} \in \cA$ obtains the minimum at some point of $\Gamma_{[3,5]}$, then none of $i,j,$ or $k$ is equal to 0.  That the restrictions of these functions to $\Gamma_{[3,5]}$ are tropically independent follows from Theorem \ref{thm:canonical}.

\begin{figure}[h!]
\begin{tikzpicture}
\matrix[column sep=0.7cm, row sep = 0.7cm] {
\begin{scope}[node distance=0 cm,outer sep = 0pt]
	      \node[bsq] (11) at (2.5,1) {1};
	      \node[bsq] (12) [right = of 11] {3};
	      \node[bsq] (13) [right = of 12] {4};
	      \node[bsq] (14) [right = of 13] {5};

\end{scope}
\\};
\end{tikzpicture}
\caption{The case $r=3$, $s=1$, $\rho=1$.}
\label{Fig:RhoOne}
\end{figure}

The cases where $\rho=2$ or $\rho=3$ follow by a similar argument.  In the first case, we consider the tableau depicted in Figure \ref{Fig:RhoTwo}, and let $\cA$ be the set of all functions $\psi_{ijk}$ other than $\psi_{003}$.  In the second case, we consider the tableau depicted in Figure \ref{Fig:RhoThree} and let $\cA$ be the full set of functions $\psi_{ijk}$.  Finally, using Theorem~\ref{thm:injectiveinduction} to argue by induction from the base case $\rho=3$, we see that the tropical maximal rank conjecture holds for $s=1$ and all $\rho \geq 3$.  We therefore see that the maximal rank conjecture for cubics holds when $r=3$.

\begin{figure}[h!]
\begin{tikzpicture}
\matrix[column sep=0.7cm, row sep = 0.7cm] {
\begin{scope}[node distance=0 cm,outer sep = 0pt]
	      \node[bsq] (11) at (2.5,1) {1};
	      \node[bsq] (12) [right = of 11] {3};
	      \node[bsq] (13) [right = of 12] {5};
	      \node[bsq] (14) [right = of 13] {6};

\end{scope}
\\};
\end{tikzpicture}
\caption{The case $r=3$, $s=1$, $\rho=2$.}
\label{Fig:RhoTwo}
\end{figure}

\begin{figure}[h!]
\begin{tikzpicture}
\matrix[column sep=0.7cm, row sep = 0.7cm] {
\begin{scope}[node distance=0 cm,outer sep = 0pt]
	      \node[bsq] (11) at (2.5,1) {1};
	      \node[bsq] (12) [right = of 11] {4};
	      \node[bsq] (13) [right = of 12] {6};
	      \node[bsq] (14) [right = of 13] {7};

\end{scope}
\\};
\end{tikzpicture}
\caption{The case $r=3$, $s=1$, $\rho=3$.}
\label{Fig:RhoThree}
\end{figure}

\subsection{Rank 4}  Now fix $r=4$ and $m=3$.  By part (1) of Theorem \ref{thm:mainthm}, the tropical maximal rank conjecture holds for $s \geq 4$ and all $\rho \geq 0$.  Similarly, by part (2) of Theorem \ref{thm:mainthm}, the tropical maximal rank conjecture holds for $s \leq 3$ and $\rho=0$.  So it suffices to consider the cases where $s \leq 3$ and $\rho > 0$.  Moreover, if the tropical maximal rank conjecture holds for some pair $(s,\rho)$ satisfying $7s+2\rho \geq 22$, then by Theorem~\ref{thm:injectiveinduction}, it holds for all larger values of $s$ and $\rho$.  It therefore suffices to consider the following pairs:
\[
(1,1), (1,2), (1,3), (1,4), (1,5), (1,6), (1,7), (1,8), (2,1), (2,2), (2,3), (2,4), (3,1) .
\]
To examine each of these cases individually would be somewhat tedious, so we will focus on just one, the case where $s=3$ and $\rho=1$.  In this case, consider the tableau pictured in Figure \ref{Fig:Rank4}.  We adjust the edge lengths so that the middle block has genus 4.  That is, rather than setting the bridges $\beta_t$ to be longer when $t$ is a multiple of 3, we instead specify the bridges $\beta_3, \beta_6, \beta_{10},$ and $\beta_{13}$ to be longer than the others.  Suppose that the minimum $\min \{ \psi_{ijk} \}$ occurs everywhere at least twice, and let $\theta$ denote this minimum.
\begin{figure}[h!]
\begin{tikzpicture}
\matrix[column sep=0.7cm, row sep = 0.7cm] {
\begin{scope}[node distance=0 cm,outer sep = 0pt]
	      \node[bsq] (11b) at (2.5,0) {1};
	      \node[bsq] (21b) [below = of 11b] {2};
	      \node[bsq] (31b) [below = of 21b] {3};
	      \node[bsq] (12b) [right = of 11b] {4};
	      \node[bsq] (22b) [below = of 12b] {5};
	      \node[bsq] (32b) [below = of 22b] {6};
	      \node[bsq] (13b) [right = of 12b] {7};
	      \node[bsq] (23b) [below = of 13b] {8};
	      \node[bsq] (33b) [below = of 23b] {9};
	      \node[bsq] (14b) [right = of 13b] {11};
	      \node[bsq] (24b) [below = of 14b] {12};
	      \node[bsq] (34b) [below = of 24b] {13};
	      \node[bsq] (15b) [right = of 14b] {14};
	      \node[bsq] (25b) [below = of 15b] {15};
	      \node[bsq] (35b) [below = of 25b] {16};
\end{scope}
\\};
\end{tikzpicture}
\caption{The case $r=4$, $m=3$, $s=3$, $\rho=1$.}
\label{Fig:Rank4}
\end{figure}

Following the proof of part (1) of Theorem \ref{thm:mainthm}, we see that $\sigma_3 \leq 13$ and consequently $\sigma_6 \leq 11$.  As in the same proof, if $\sigma_{10} > 9$, then the only $\sigma$-permissible functions on the block $\Gamma_{[7,10]}$ are of the form $\psi_{ij2}$, where $i \leq 2$ and $j \geq 2$.  Note that there are 9 such functions.  This is one larger than the bound $s+r+1$ obtained in the proof of Theorem \ref{thm:mainthm}, but because we have also increased the genus of the block by one, the same argument shows that these 9 functions have tropically independent restrictions to the block.  It follows that $\sigma_{10} \leq 9$. Continuing to follow the proof of Theorem \ref{thm:mainthm}, we see that $\sigma_{13} \leq 7$, and thus $\sigma_{16} \leq 1$.  This is a contradiction, because there is only one function $\psi_{ijk}$ with $\sigma_{16} \psi_{ijk} =1$, and only one with $\sigma_{16} \psi_{ijk} = 0$.

In a similar way, we can prove the tropical maximal rank conjecture for $r=4$, $m=3$, and all $s$ and $\rho$.

\subsection{Rank 5}

Now fix $r=5$ and $m=3$.  This is the first case where not all of the $\rho=0$ cases are covered by Theorem~\ref{thm:mainthm}.  We will show that the tropical maximal rank conjecture for cubics holds when $r=5$ and $\rho =0$.   More specifically, part (2) of Theorem \ref{thm:mainthm} shows that the tropical maximal rank conjecture holds for $s \leq 4$ and $\rho = 0$, whereas part (1) shows that the tropical maximal rank conjecture for cubics holds for $s \geq 7$ and all $\rho \geq 0$.  We now consider the case where $r=s=5$ and $\rho=0$.  Note that, by Theorem \ref{thm:injectiveinduction}, this implies the cases where $r=5$, $s \geq 5$, and $\rho \geq 0$.  In particular, it implies the remaining $\rho=0$ case, that of $s=6$.

Again, we consider the rectangular tableau with $6$ columns and $5$ rows, in which the numbers $1,2, \ldots , 5$ appear in the first column, $6 , 7, \ldots , 10$ appear in the second column, and so on.  We let $D$ be the corresponding divisor on the generic chain of loops.  Our goal is to show that the full set of functions $\psi_{ijk}$ is tropically independent.  To that end, suppose that the minimum $\min \{ \psi_{ijk} \}$ occurs everywhere at least twice, and let $\theta$ denote this minimum.

As in Lemma \ref{Lemma:FirstColumn}, we see that $\sigma_4 \leq 15$.  If $\sigma_5 > 15$, then the only $\sigma$-permissible functions on $\gamma_5^{\circ}$ are $\psi_{013}$ and $\psi_{022}$.  Since no two functions have identical restrictions to the loop $\gamma_5^{\circ}$, this is impossible, and thus $\sigma_5 \leq 15$.  Now, the same argument as in Lemma \ref{Lemma:SecondColumn} shows that $\sigma_9 \leq 14$.  If $\sigma_{10} > 14$, then the only $\sigma$-permissible functions on $\gamma_{10}^{\circ}$ are $\psi_{023}$ and $\psi_{122}$, which is again impossible.  It follows that $\sigma_{10} \leq 14$.

If we can show that $\sigma_{15} \leq 14$, then by a symmetric argument, we will be done.  To see this, we follow the argument in part (1) of Theorem~\ref{thm:mainthm}.  If $\sigma_{15} > 14$, then the $\sigma$-permissible functions on the block $\Gamma_{[11,15]}$ are those of the form $\psi_{ij2}$, where $i \leq 2$ and $j \geq 2$.  If $\sigma_{13} > 15$, then the $\sigma$-permissible functions on $\Gamma_{[11,13]}$ are
\[
\psi_{024}, \psi_{123}, \psi_{025}, \psi_{124}, \psi_{222} .
\]
But, by the argument in Theorem~\ref{thm:mainthm}, these 5 functions have independent restrictions to $\Gamma_{[11,13]}$.  It follows that $\sigma_{13} \leq 15$.  But there do not exist two functions of the form $\psi_{ij2}$ with $i \leq 2 \leq j$ and $\sigma_{13} \psi_{ij2} \leq 15$ that have the same slope on $\beta_{13}$.  It follows that $\sigma_{15} \leq 14$, and therefore the tropical maximal rank conjecture holds in this case.

\bibliography{math}

\end{document}